\title[The Farrell-Jones conjecture for lattices in Lie groups]
{The Farrell-Jones conjecture for arbitrary lattices in virtually connected Lie groups}
\author{Holger Kammeyer}
\author{Wolfgang L\"uck}
\author{Henrik R\"uping}
\address{Mathematisches Institut\\ Universit\"at Bonn\\ Endenicher Allee 60\\ 53115 Bonn\\ Germany}
\email{kammeyer@math.uni-bonn.de}
\urladdr{http://www.math.uni-bonn.de/people/kammeyer/}
\email{wolfgang.lueck@him.uni-bonn.de}
\urladdr{http://www.him.uni-bonn.de/lueck/}
\email{henrik.rueping@hcm.uni-bonn.de}
\urladdr{http://www.math.uni-bonn.de/people/rueping/}
\keywords{Farrell-Jones Conjecture, lattices in virtually connected Lie groups}       
\subjclass[2010]{Primary: 18F25, 19A31, 19B28, 19G24, 22A26}
\date{January 2014}
\newtheorem{theorem}{Theorem}
\newtheorem{proposition}{Proposition}
\newtheorem{lemma}{Lemma}
\theoremstyle{remark}
\newtheorem{remark}{Remark}
   \let\c@corollary=\c@theorem
   \let\c@proposition=\c@theorem
   \let\c@remark=\c@theorem
   \let\c@lemma=\c@theorem
\newcommand*{\MRref}[2]{\href{http://www.ams.org/mathscinet-getitem?mr=#1}{MR \textbf{#1}}}
\newcommand*{\arXiv}[1]{\href{http://www.arxiv.org/abs/#1}{arXiv:\textbf{#1}}}
\newcommand*{\Z}{\mathbb Z}
\newcommand*{\Q}{\mathbb Q}
\newcommand*{\R}{\mathbb R}
\newcommand*{\C}{\mathbb C}
\newcommand*{\Gg}{\mathbf G}
\newcounter{commentcounter}
\newcommand{\ignore}[1]{}
\begin{document}

\begin{abstract}  
We prove the $K$- and the $L$-theoretic Farrell-Jones conjecture 
with coefficients in additive categories  and with  finite wreath products 
for arbitrary lattices in virtually connected Lie groups. 
\end{abstract} 

\maketitle


\section*{Introduction}

\noindent The Farrell-Jones conjecture predicts the algebraic $K$-theory and $L$-theory of group
rings.  The original formulation can be found in Farrell-Jones~\cite{Farrell-Jones}.  We
will deal with the more general version with coefficients in additive categories and with
finite wreath products, see~\cites{Bartels-Lueck:add,Bartels-Reich(2007)}, and ~\cite{Bartels:glnz}*{Definition~6.1}. 
The relevance of the Farrell-Jones Conjecture comes from the fact that it implies many other prominent
conjectures, for instance the one due to Borel about topological rigidity of aspherical
closed manifolds, the one due to Novikov about the homotopy invariance of higher
signatures, and the one due to Kaplansky about the triviality of idempotents in group
rings with coefficients in a field of torsionfree groups. For an overview of the
Farrell-Jones conjecture and its consequences, see for
instance~\cites{Bartels-Lueck-Reich:appl,Lueck:GroupRings,Lueck-Reich:survey}.

Let \(\mathfrak{FJ}\) be the class of groups satisfying the K- and L-theoretic
Farrell--Jones conjecture with coefficients in additive categories and with finite wreath
products. Recently, the third named author~\cite{Rueping:GLQ} proved 
\(\textup{GL}(n, \Q) \in \mathfrak{FJ}\).  In this article we show that the theory of deformations and rigidity
of lattices in semisimple Lie groups due to Calabi, Vesentini and Weil allow the following
conclusion.  A topological group is called \emph{virtually connected} if it has finitely
many path components.  A discrete subgroup of a locally compact Hausdorff group is 
called a \emph{lattice} if the quotient space \(G/\Gamma\) has finite covolume with respect to the Haar measure of \(G\).

\begin{theorem}[Lattices in virtually connected Lie groups] \label{thm:mainthm} Let \(G\)
  be a virtually connected Lie group and let \(\Gamma \subset G\) be a lattice.

Then \(\Gamma\) lies in \(\mathfrak{FJ}\).
\end{theorem}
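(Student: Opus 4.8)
The plan is to combine the standard inheritance properties of $\mathfrak{FJ}$ --- closure under subgroups, under finite-index overgroups, under finite direct products, under directed colimits, under commensurability, and under an extension $1 \to K \to \Gamma \to Q \to 1$ whenever $Q \in \mathfrak{FJ}$ and the preimage in $\Gamma$ of every virtually cyclic subgroup of $Q$ lies in $\mathfrak{FJ}$ --- with four external facts: (a) virtually poly-cyclic groups, hyperbolic groups and $\textup{CAT}(0)$-groups lie in $\mathfrak{FJ}$; (b) a group that is hyperbolic relative to a finite family of subgroups each of which lies in $\mathfrak{FJ}$ again lies in $\mathfrak{FJ}$ (Bartels); (c) $\textup{GL}(n,\Q) \in \mathfrak{FJ}$ (R\"uping); and (d) the structure theory of lattices in connected Lie groups (Mostow, Raghunathan) together with the local rigidity theorems of Calabi--Vesentini, Weil and Garland--Raghunathan.

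The first step is to reduce to the case that $G$ is connected: since $G$ is virtually connected, $G^{\circ}$ has finite index in $G$, so $\Gamma \cap G^{\circ}$ has finite index in $\Gamma$ and is a lattice in $G^{\circ}$, and closure under finite-index overgroups permits replacing $\Gamma$ by $\Gamma \cap G^{\circ}$. Next I would peel off the radical $R$ of $G$. Let $N$ be its nilradical; then $[R,R] \subseteq N$, so $R/N$ is abelian. By the structure theory in (d), $\Gamma N$ is closed, $\Gamma \cap N$ is a lattice in the nilpotent group $N$, and $\Gamma N/N$ is a lattice in $G/N$; applying the same statement to the abelian radical of $G/N$ produces a short exact sequence $1 \to \Gamma_R \to \Gamma \to \Gamma_S \to 1$ in which $\Gamma_R$ is poly-cyclic (an extension of a finitely generated abelian group by a finitely generated torsion-free nilpotent one), hence in $\mathfrak{FJ}$, and $\Gamma_S$ is a lattice in the semisimple Lie group $S = G/R$. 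Since the preimage in $\Gamma$ of a virtually cyclic subgroup of $\Gamma_S$ is an extension of a virtually cyclic group by a poly-cyclic one, hence virtually poly-cyclic, the extension property reduces the problem to $\Gamma_S$. Modding out the discrete, finitely generated center $Z(S)$ --- a lattice meets $Z(S)$ in a finitely generated abelian group and, by the Borel density theorem, projects to a lattice in the linear adjoint group $S/Z(S)$ --- and using the extension property once more, I am reduced to a lattice in a connected semisimple Lie group with trivial center. The projection of such a lattice off the compact simple factors has finite kernel and discrete image, so I may further assume there are no compact factors; and a lattice in a product of noncompact simple Lie groups is commensurable to a product of irreducible lattices in subproducts. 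Hence, using closure under finite products and commensurability, the theorem reduces to showing that an irreducible lattice $\Gamma$ in a finite product $H$ of noncompact simple Lie groups lies in $\mathfrak{FJ}$.

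For this final step I would split on the isomorphism type of $H$. If $H$ is locally isomorphic to $\textup{SL}(2,\R)$, then $\Gamma$ is a lattice in $\textup{PSL}(2,\R)$, hence virtually free or virtually a closed surface group, in particular hyperbolic, so $\Gamma \in \mathfrak{FJ}$ by (a). If $H$ is locally isomorphic to $\textup{SL}(2,\C)$ and $\Gamma$ is non-uniform, then $\Gamma$ is the fundamental group of a finite-volume hyperbolic $3$-orbifold and so is hyperbolic relative to cusp subgroups that are virtually $\Z^2$ and hence lie in $\mathfrak{FJ}$; thus $\Gamma \in \mathfrak{FJ}$ by (b). In every remaining case $\Gamma$ is either arithmetic --- by Margulis arithmeticity when $\textup{rank}_{\R}(H) \geq 2$, in particular whenever $H$ has more than one simple factor, and by the superrigidity theorems of Corlette and Gromov--Schoen when $H$ is locally isomorphic to $\textup{Sp}(n,1)$ or to $F_4^{-20}$ --- or else $H$ is locally isomorphic to $\textup{SO}(n,1)$ with $n \geq 4$ or to $\textup{SU}(n,1)$ with $n \geq 2$ and $\Gamma$ is locally rigid, by Weil and Calabi--Vesentini when $\Gamma$ is uniform and by Garland--Raghunathan when it is not. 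If $\Gamma$ is arithmetic, it is commensurable to a subgroup of some $\textup{GL}(N,\Z)$, so $\Gamma \in \mathfrak{FJ}$ by (c). If $\Gamma$ is locally rigid, then --- using that lattices in semisimple Lie groups are finitely presented, so that the relevant representation variety is an affine $\Q$-variety with finitely many components --- the defining embedding is an isolated point of that variety modulo conjugation; a standard Galois-descent argument then conjugates $\Gamma$ into the $\overline{\Q}$-points of the ambient algebraic group, whence the matrix entries of a finite generating set generate a number field $K$ and are $S$-integral for a finite set of places $S$, so that $\Gamma$ embeds into $\textup{GL}(N, \mathcal{O}_{K,S})$; finally restriction of scalars embeds $\textup{GL}(N, \mathcal{O}_{K,S})$ into $\textup{GL}(M,\Q)$ for a suitable $M$, and $\Gamma \in \mathfrak{FJ}$ by (c) again.

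I expect the obstacle to be twofold. The conceptual crux --- and the only point at which the deformation theory of Calabi, Vesentini, Weil and Garland--Raghunathan is genuinely needed --- is the passage from a locally rigid lattice to an explicit finitely generated subgroup of $\textup{GL}(M,\Q)$ that R\"uping's theorem can digest; the Galois descent and the control of denominators are the delicate part there. The remaining difficulty is more bureaucratic: in the reduction step one must verify carefully that all the subgroups occurring are closed and that the relevant projections are discrete, so that the short exact sequences used genuinely exist and each reduction is compatible with the extension property of $\mathfrak{FJ}$.
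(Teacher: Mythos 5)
Your reduction skeleton (pass to the identity component, peel off the solvable part through the nilradical, then treat irreducible lattices in semisimple groups by rigidity, conjugation into number-field points, restriction of scalars, and \(\textup{GL}(n,\Q)\in\mathfrak{FJ}\)) is the paper's, and your endgame for the locally rigid lattices is exactly its Proposition~\ref{prop:FJsemisimp}. But there is a genuine gap in the reduction. The assertion that ``by the structure theory, \(\Gamma\cap N\) is a lattice in the nilpotent group \(N\)'' is false for a general lattice in a connected Lie group: take \(G=\R\times\textup{SO}(3)\) and \(\Gamma\) the infinite cyclic group generated by \((1,k)\) with \(k\) a rotation of infinite order. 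This is a discrete cocompact subgroup, \(N=R=\R\times\{1\}\), and \(\Gamma\cap N\) is trivial. The correct sufficient condition (Vinberg--Gorbatsevich--Shvartsman, Theorem~1.6, p.~106) is that no compact factor of a Levi subgroup \(S\) acts trivially on the radical \(R\), and the paper arranges this by \emph{first} dividing out the maximal compact connected normal subgroup of \(G\) --- harmless for \(\mathfrak{FJ}\), since it meets \(\Gamma\) in a finite group --- and then observing that a compact factor of \(S\) acting trivially on \(R\) would be normal in \(G=RS\), hence trivial. You perform the compact-factor reduction only later, inside the semisimple quotient, which is too late: the step where you intersect with \(N\) already needs it, as does the second application to the nilradical of \(G/N\), and likewise the discreteness of \(Z(S)\Gamma\) used to divide out the center is only available in the literature for \(S\) without compact factors. (Your instinct to factor out the radical in two nilradical steps rather than all at once is the right one: the statement that \(\Gamma/(\Gamma\cap R)\) is a lattice in \(G/R\) rests on a disputed result of Raghunathan, which the paper deliberately circumvents by inducting on \(\dim G\).)

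In the semisimple endgame your route genuinely diverges and is heavier than necessary. The paper makes no arithmetic-versus-rigid case split: the single vanishing statement \(H^1(\Gamma,\mathfrak{g})=0\), valid for every irreducible lattice in a connected semisimple group without compact factors that is not locally isomorphic to \(\textup{SL}(2,\R)\) or \(\textup{SL}(2,\C)\), feeds into the conjugation theorem \(g\Gamma g^{-1}\subset\Gg(\mathbb{F})\), and restriction of scalars then lands \emph{every} such lattice --- arithmetic ones included --- in \(\textup{GL}(n,\Q)\), with no appeal to Margulis, Corlette or Gromov--Schoen and no need to control denominators or produce \(S\)-integral entries, since \(\textup{GL}(n,\Q)\in\mathfrak{FJ}\) already suffices. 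The two residual families \(\textup{SL}(2,\R)\) and \(\textup{SL}(2,\C)\), uniform and non-uniform alike, are dispatched by the single observation that such lattices are \(\textup{CAT}(0)\) groups, which replaces your appeal to relative hyperbolicity; note that your case split as written also leaves uniform lattices in \(\textup{SL}(2,\C)\) uncovered, since they are neither non-uniform nor on your list \(\textup{SO}(n,1)\), \(n\ge 4\), though they are of course hyperbolic.
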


\noindent This extends a previous result of Bartels, Farrell and the second named 
author~\cite{Bartels-Farrell-Lueck:Cocompact} from the class of cocompact lattices to the class
of all lattices in virtually connected Lie groups.

Deligne--Mostow~\cite{Deligne-Mostow:Monodromy} have constructed non-cocompact lattices in
\(\textup{SU}(2,1)\) and \(\textup{SU}(3,1)\) which are neither hyperbolic, nor
\(\textup{CAT}(0)\), nor arithmetic, nor solvable (not even amenable).  So
Theorem~\ref{thm:mainthm} comprises groups for which the Farrell-Jones conjecture was a
priori unknown. Note that the operator-theoretic version of the Farrell-Jones Conjecture,
the Baum-Connes conjecture for the topological $K$-theory of the reduced group
$C^*$-algebra, is still open for many lattices in virtually connected Lie groups, for
instance for $SL(n,\Z)$ for $n \ge 3$. 

The in our view most general result about lattices will be proved in
Theorem~\ref{thm:mostgeneral}, where the virtually connected Lie group is replaced by a
second countable locally compact Hausdorff group.

This paper has been financially supported by the Leibniz-Award, granted by the Deutsche
Forschungsgemeinschaft, of the second named author.


\section{The status of the Farrell-Jones Conjecture}

\noindent The next result describes what is known about $\mathfrak{FJ}$.
We will frequently use some of the properties of $\mathfrak{FJ}$ listed below, no more knowledge 
about the Farrell-Jones Conjecture is required to understand the proofs in this paper.

\begin{theorem}[Status of the Farrell-Jones Conjecture]
\label{the:FJinh} The class of groups $\mathfrak{FJ}$ has the following properties:
\begin{enumerate}
\item \label{the:FJinh:CAT} It contains all hyperbolic groups and all CAT(0)-groups;
\item \label{the:FJinh:solv} It contains all solvable groups;
\item \label{the:FJinh:GL} It contains \(\textup{GL}(n, \Q)\) and \(\textup{GL}(n, F(t))\) for any finite field $F$;
\item \label{the:FJinh:arith} It contains all $S$-arithmetic groups;
\item \label{the:FJinh:cocomapct_lattices} It contains all cocompact lattices in virtually connected Lie groups;
\item \label{the:FJinh:low} It contains the fundamental group of any manifold of dimension $\le 3$;
\item \label{the:FJinh:prod} It is closed under direct and free products;
\item \label{the:FJinh:sub} It is closed under taking subgroups;
\item \label{the:FJinh:hom} If $f:G\rightarrow H$ is a group homomorphism 
such that $H$, \(\ker f\) and $f^{-1}(Z)$ lie in $\mathfrak{FJ}$ for every 
infinite cyclic subgroup $Z\subset H$, then $G$ lies in $\mathfrak{FJ}$;
\item \label{the:FJinh:over} If a finite index subgroup of a group $G$ lies in $\mathfrak{FJ}$, so does $G$;
\end{enumerate}
\end{theorem}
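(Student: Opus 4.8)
The plan is to read this theorem as a compilation of results that are by now available in the literature, so that the ``proof'' amounts to matching each of the ten items to the existing theorem which supplies it, in every case in the strong form (coefficients in additive categories, see \cites{Bartels-Lueck:add,Bartels-Reich(2007)}, and finite wreath products, see \cite{Bartels:glnz}*{Definition~6.1}) demanded by the definition of $\mathfrak{FJ}$. The overall picture is collected in the surveys \cites{Bartels-Lueck-Reich:appl,Lueck:GroupRings,Lueck-Reich:survey}; I would nonetheless go through the list item by item, and throughout I would keep careful track of whether a given source already delivers the wreath-product refinement or whether it must be added separately.

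For the base classes I would argue as follows. Item~\eqref{the:FJinh:CAT} on hyperbolic and $\textup{CAT}(0)$-groups comes from the work of Bartels--L\"uck--Reich (hyperbolic groups, $K$-theory) and Bartels--L\"uck (CAT(0)-groups, and the $L$-theoretic statements). Item~\eqref{the:FJinh:solv} on solvable groups is Wegner's theorem. Item~\eqref{the:FJinh:GL}: the case $\textup{GL}(n,\Q)$ is \cite{Rueping:GLQ}, and the function-field case $\textup{GL}(n,F(t))$ follows by the analogous argument. Item~\eqref{the:FJinh:arith} on $S$-arithmetic groups rests on the same circle of ideas, namely the results on $\textup{GL}_n(\Z)$ of Bartels--L\"uck--Reich--R\"uping together with \cite{Rueping:GLQ}. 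Item~\eqref{the:FJinh:cocomapct_lattices} on cocompact lattices is exactly \cite{Bartels-Farrell-Lueck:Cocompact}. Item~\eqref{the:FJinh:low}, fundamental groups of manifolds of dimension $\le 3$, is reduced by geometrization and the JSJ decomposition to Seifert fibered and hyperbolic pieces---which lie in $\mathfrak{FJ}$ by the items above---and then reassembled by the inheritance properties that follow.

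Items~\eqref{the:FJinh:prod}--\eqref{the:FJinh:over} are the formal closure properties, established for the additive-coefficient theory in \cite{Bartels-Reich(2007)} and in the colimit/inheritance formalism of Bartels--Echterhoff--L\"uck. Subgroup-closure~\eqref{the:FJinh:sub} is essentially built into the wreath-product definition: if $G \in \mathfrak{FJ}$ and $H \le G$, then for every finite group $F$ one has $H \wr F \le G \wr F$, so that the plain conjecture for $G \wr F$ passes to the subgroup $H \wr F$. The homomorphism criterion~\eqref{the:FJinh:hom} is the fibering theorem, proved via the Transitivity Principle, which allows the family of virtually cyclic subgroups of $H$ to be pulled back along $f$ once the conjecture is known on $\ker f$ and on the preimages $f^{-1}(Z)$. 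Finite-index-overgroup-closure~\eqref{the:FJinh:over} is then a consequence: passing from a finite-index subgroup to its normal core $N$ (permissible by~\eqref{the:FJinh:sub}) yields an extension $1 \to N \to G \to Q \to 1$ with $Q$ finite; since $Q$ is finite it lies in $\mathfrak{FJ}$ and has no infinite cyclic subgroups, so~\eqref{the:FJinh:hom} applies. This is exactly the feature for which the wreath-product formulation was designed. Finally, closure under direct and free products~\eqref{the:FJinh:prod} is supplied by the product theorem and by the treatment of groups acting on the Bass--Serre tree, a $1$-dimensional $\textup{CAT}(0)$-space, with stabilizers in $\mathfrak{FJ}$.

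The real work here is not in any single proof but in the bookkeeping of the strong form: several of the original papers state the conjecture only without wreath products, or only with coefficients in $\Z$, and one must confirm in each case that the strengthening to $\mathfrak{FJ}$ as in \cite{Bartels:glnz}*{Definition~6.1} with additive coefficients is genuinely on record---whether in the original source, in a follow-up, or as a formal consequence of the inheritance properties~\eqref{the:FJinh:prod}--\eqref{the:FJinh:over} applied to the wreath products $G \wr F$. Verifying this consistency across all ten items, rather than proving any one of them, is where the care is required; the genuinely deep inputs, the geometric results~\eqref{the:FJinh:CAT} and the fibering theorem~\eqref{the:FJinh:hom}, are imported wholesale from the works cited above.
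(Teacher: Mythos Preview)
Your proposal is correct and takes essentially the same approach as the paper: both treat the theorem as a compilation of results already in the literature, with the paper simply citing \cites{Bartels-Lueck:CAT(0),Bartels-Farrell-Lueck:Cocompact,Bartels-Lueck-Reich:hyp,Rueping:GLQ,Wegner:CAT(0),Wegner:Solvable} for the non-wreath versions and pointing to \cite{Bartels-Farrell-Lueck:Cocompact}*{Remark~0.4} for the passage to wreath products. Your write-up is considerably more detailed than the paper's one-sentence citation list, but the underlying strategy---match each item to its source and then track the wreath-product bookkeeping---is the same.
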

\begin{proof}
  See for
  instance~\cites{Bartels-Lueck:CAT(0),Bartels-Farrell-Lueck:Cocompact,Bartels-Lueck-Reich:hyp,
    Rueping:GLQ,Wegner:CAT(0),Wegner:Solvable} for proofs (without wreath products).  The
  version and the corresponding proofs with wreath products are just a slight modification
  of the version without wreath products,
  compare~\cite{Bartels-Farrell-Lueck:Cocompact}*{Remark~0.4}.
\end{proof}

Farrell and Jones~\cite{Farrell-Jones(1998)}*{Proposition~0.10} have proved the
$L$-theoretic Farrell-Jones Conjecture and the
$K$-theoretic Farrell-Jones Conjecture in dimensions $\le 1$, both with untwisted coefficients in $\Z$,
for fundamental groups of A-regular negatively curved complete Riemannian manifolds. 

A Riemannian manifold is \emph{A-regular} if for some nonnegative sequence \( A = (A)_i \) we have
\( |\nabla^i R| \le A_i \)
where the indices i vary over the natural numbers and \(\nabla^i R\) is the \(i\)-th covariant derivative
 of the curvature tensor.

 If the lattice is torsionfree, then the quotient of the symmetric space by the action
 of that lattice is an $A$-regular manifold.  We want to also include lattices with torsion and allow twisted coefficients and 
want to consider all degrees. This makes the use of the important inheritance properties of the general version possible.


\section{Some preliminaries about finitely generated discrete subgroups 
in linear algebraic groups defined over \texorpdfstring{$\Q$}{Q}}

\noindent A key ingredient in our proof is the following striking property of lattices whose first
cohomology with coefficients in the adjoint representation 
vanishes~\cite{Raghunathan:Discrete}*{Proposition~6.6 and Theorem~6.7, pp.~90--91}.

\begin{theorem} \label{thm:latticerational} Let \(\Gg\) be a linear algebraic group
  defined over \(\Q\).  If \(\Gamma \subset \Gg(\R)\) is a finitely generated discrete
  subgroup and \(H^1(\Gamma, \mathfrak{g}) = 0\), then there exists a number field
  \(\mathbb{F}\) and an element \(g \in \Gg(\R)\) such that \(g \Gamma g^{-1} \subset   \Gg(\mathbb{F})\).
\end{theorem}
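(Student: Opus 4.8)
The plan is to carry out Weil's local rigidity argument on the representation scheme of $\Gamma$ and then descend to a number field; the only delicate point will be to arrange that the conjugating element can be chosen \emph{real}.

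First I would fix a finite generating set $\gamma_1,\dots,\gamma_n$ of $\Gamma$ and form the representation scheme $R=\mathrm{Hom}(\Gamma,\Gg)$, a closed subscheme of $\Gg^n$; since $\Gg^n$ is Noetherian, finitely many relations of $\Gamma$ already cut it out, and since $\Gg$ is defined over $\Q$ so is $R$. The inclusion $\Gamma\hookrightarrow\Gg(\R)$ is a real point $\rho_0\in R(\R)$, and $\Gg$ acts on $R$ by conjugation. Weil's computation identifies the Zariski tangent space $T_{\rho_0}R$ with the $1$-cocycles $Z^1(\Gamma,\mathfrak g)$, where $\Gamma$ acts on $\mathfrak g$ by $\mathrm{Ad}$ through $\rho_0$, and the tangent space of the orbit $\Gg\cdot\rho_0$ with the coboundaries $B^1(\Gamma,\mathfrak g)$. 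Since $H^1(\Gamma,\mathfrak g)=0$ means $Z^1=B^1$, the chain
\[
\dim(\Gg\cdot\rho_0)\ \le\ \dim_{\rho_0}R\ \le\ \dim T_{\rho_0}R\ =\ \dim Z^1\ =\ \dim B^1\ =\ \dim(\Gg\cdot\rho_0)
\]
collapses, so $\rho_0$ is a smooth point of $R$, lies on a unique irreducible component $R_0$, and $R_0=\overline{\Gg\cdot\rho_0}$ with $\Gg\cdot\rho_0$ Zariski-open and dense in $R_0$.

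Next I would fix fields of definition. As $R$ is defined over $\Q$, each irreducible component of $R$ is defined over a number field; in particular so is $R_0$. Since moreover $R$ is defined over $\R$ and $\rho_0$ is a real point, complex conjugation fixes the unique component through $\rho_0$, so $R_0$ is defined over a number field $K\subset\R$. On real points, the orbit $O:=\Gg(\R)\cdot\rho_0$ is a locally closed submanifold of $R(\R)$, and its real dimension equals $\dim_{\C}\Gg-\dim_{\C}Z_{\Gg}(\Gamma)=\dim(\Gg\cdot\rho_0)=\dim R_0$, which is also the dimension of the real-analytic manifold $R(\R)$ near the smooth point $\rho_0$; hence $O$ contains an open neighbourhood of $\rho_0$ in $R(\R)$ for the analytic topology.

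The main obstacle is now to locate inside that analytic neighbourhood a representation with coordinates in $\overline{\Q}$, since then the conjugating element will automatically be real. For this I would use that $\rho_0$ is a smooth real point of the $K$-variety $R_0$, of dimension $d$ say: choose a Zariski-open $K$-subvariety $V\subset R_0$ containing $\rho_0$ and an \'etale $K$-morphism $\phi\colon V\to\mathbb{A}^d_K$. On real points $\phi$ is a local diffeomorphism near $\rho_0$, so its image contains a Euclidean ball about $\phi(\rho_0)$; in it choose a point $q$ with coordinates in $\overline{\Q}\cap\R$ and as close to $\phi(\rho_0)$ as desired. The fibre $\phi^{-1}(q)$ is finite and \'etale over the residue field $K(q)$, a number field, so all of its points have coordinates in $\overline{\Q}$; the preimage of $q$ lying near $\rho_0$ in $V(\R)$ is thus a point $\rho_1\in R_0(\overline{\Q})\cap R(\R)$ arbitrarily close to $\rho_0$, which we take inside the analytic neighbourhood above. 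Then $\rho_1$ lies in the real orbit $O$, so $\rho_1=g\rho_0g^{-1}$ for some $g\in\Gg(\R)$, and therefore $g\Gamma g^{-1}=\rho_1(\Gamma)\subset\Gg(\overline{\Q})$. Finally $g\Gamma g^{-1}$ is generated by the finitely many elements $g\gamma_ig^{-1}$, whose coordinates are algebraic numbers, so if $\mathbb{F}$ denotes the number field they generate then $g\Gamma g^{-1}\subset\Gg(\mathbb{F})$, as required.
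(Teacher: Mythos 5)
Your argument is correct and follows essentially the same route as the paper, which cites Raghunathan (Proposition~6.6 and Theorem~6.7) for the full proof and then sketches exactly this strategy: $H^1(\Gamma,\mathfrak g)=0$ forces the conjugation orbit of the inclusion to contain an analytic neighbourhood in $\mathrm{Hom}(\Gamma,\Gg(\R))$ at a smooth point, and one then finds a $\overline{\Q}$-rational representation in that neighbourhood. Your \'etale-coordinate argument is in effect a self-contained proof of the lemma of Garland--Raghunathan that the paper invokes for this last step, so the two proofs agree in substance.
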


 \noindent We want to give an idea as to why group cohomology decides about the possibility of
conjugating a lattice into the \(\mathbb{F}\)-rational points of an algebraic group.  For
more information, see~\cite{Vinberg-et-al:Discrete}*{Section 6}.  Let \(G = \Gg(\R)\) and
let \(\textup{Hom}(\Gamma, G)\) be the space of all homomorphisms from \(\Gamma\) to \(G\)
with the topology of pointwise convergence.  A base point \(u \in \textup{Hom}(\Gamma,G)\) 
is given by the inclusion \(u \colon \Gamma \hookrightarrow G\).  A
\emph{deformation} of the lattice \(\Gamma\) in \(G\) is a map 
\(\varphi \colon I\rightarrow \textup{Hom}(\Gamma, G)\), defined on some open interval \(I\) 
containing zero, such that \(\varphi_0 = u\) and such that \(\varphi(\gamma) \colon I \rightarrow G\)
is smooth for every \(\gamma \in \Gamma\).  Given a smooth path \(g \colon I \rightarrow G\) 
with \(g_0 = e \in G\), we obtain a deformation setting \(\varphi(g)_t = g_t u g_t^{-1}\).  
These deformations are always present, regardless of the specific group
\(\Gamma\) and its embedding \(u\) in \(G\).  Therefore the deformations \(\varphi(g)\)
are termed \emph{trivial}.  Right multiplication with an element \(h \in G\) defines a
self-diffeomorphism \(R(h)\) of \(G\).  We identify the Lie algebra \(\mathfrak{g}\) of
\(G\) with the tangent space \(T_e(G)\).  Then any deformation \(\varphi\) defines a
function \(c(\varphi) \colon \Gamma \rightarrow \mathfrak{g}\) setting
\[ 
 c(\varphi)(\gamma) = \textup{d} R(\gamma^{-1}) \left. \frac{\textup{d}}{\textup{d}t}
  \varphi(\gamma)_t \right|_{t = 0}.
\] 
One easily verifies that \(c(\varphi)\) is a
cocycle of \(\Gamma\) with values in \(\textup{Ad} \circ u \colon \Gamma \rightarrow \mathfrak{g}\).  
For a smooth path \(g \colon I \rightarrow G\) with \(g_0 = e\) let \(X_g
\in \mathfrak{g}\) be the velocity vector of \(g\) at \(t = 0\).  Then for the trivial
deformation \(\varphi(g)\) we obtain 
\(c(\varphi(g))(\gamma) = X_g - \textup{Ad} \circ u (\gamma) \,X_g\) 
which means \(c(\varphi(g))\) is a coboundary.  This lets one hope that
the condition \(H^1(\Gamma, \mathfrak{g}) = 0\) might imply that every deformation of
\(\Gamma\) in \(G\) is trivial.  Indeed, this can be proved using, amongst other things,
the implicit function theorem.

Let \(\gamma_1, \ldots, \gamma_n\) be a choice of generators of \(\Gamma\).  Then we
obtain an embedding \(\textup{Hom}(\Gamma, G) \rightarrow G^n\) by sending a homomorphism
\(r \in \textup{Hom}(\Gamma, G)\) to the \(n\)-tuple \((r(\gamma_1), \ldots,
r(\gamma_n))\).  We write each relation of \(\Gamma\) as a word \(w = w(\gamma_1, \ldots, \gamma_n)\) 
in the symbols \(\gamma_i\).  Allowing general elements \(g \in G\) to take
the place of the \(\gamma_i\), each relation \(w\) defines a morphism \(w \colon G^n
\rightarrow G\) of real affine varieties defined over \(\Q\).  The image of our embedding
\(\textup{Hom}(\Gamma, G) \rightarrow G^n\) is then given by \(\bigcap_w w^{-1}(e)\), the
intersection taken over all relations in \(\Gamma\).  Thus \(\textup{Hom}(\Gamma, G)\) is
embedded as a \(\Q\)-subvariety of \(G^n\), no matter whether finitely many relations are
sufficient or not.  If \(H^1(\Gamma, \mathfrak{g}) = 0\), then every deformation of
\(\Gamma\) in \(G\) is trivial.  So in that case the orbit of \(u\) in
\(\textup{Hom}(\Gamma, G)\) under the \(G\)-action by conjugation contains an open
neighborhood \(U \subset \textup{Hom}(\Gamma, G)\) of \(u\) which also implies that the
point \(u\) is simple.  It is then a lemma of algebraic 
geometry~\cite{Garland-Raghunathan:Fundamental}*{Lemma~7.1, p.~311} that \(U\) contains a
\(\overline{\Q}\)-rational point \(u'\) where \(\overline{\Q}\) is the algebraic closure
of \(\Q\).  Since \(u'\) is an \(n\)-tuple of matrices, clearly \(u'\) is in fact an
\(\mathbb{F}\)-rational point for a finite extension \(\mathbb{F}\) of \(\Q\).  This gives
the theorem.  It remains the question which lattices \(\Gamma\) have vanishing
\(H^1(\Gamma, \mathfrak{g})\).

Let \(G\) be a connected semisimple Lie group. The Lie algebra \(\mathfrak{g}\) of \(G\) has a decomposition
\(\mathfrak{g}=\mathfrak{g}_1 \oplus \cdots \oplus \mathfrak{g}_k\) into simple ideals
which is unique up to permutation.  The unique connected Lie subgroup \(G_i\) in
\(G\) with Lie subalgebra \(\mathfrak{g}_i\) is a priori not closed.  But since \(\mathfrak{g}_i\) is an ideal, the group \(G_i\) is normal and hence actually closed by~\cite{Ragozin:Normal}.  Multiplication defines an epimorphism 
\begin{eqnarray}
G_1 \times \cdots \times G_k & \rightarrow & G
\label{almost_direct_product}
\end{eqnarray}
with discrete (in fact central) kernel.  This is equivalent to \(G\) being the \emph{almost
  direct product} of the normal subgroups \(G_i\), i.e., $G = G_1 \cdot G_2 \cdot \cdots
\cdot G_k := \{g_1 \cdot g_2 \cdot \cdots \cdot g_k \mid g_i \in G_i, i = 1,2, \ldots
,k\}$ and the intersection of $G_i$ with $\prod_{j \not= i} G_j$ is discrete for all $i$.
A \emph{compact factor} \(K\) of \(G\) is a connected, normal,
compact subgroup of \(G\).  It follows from \cite{Ragozin:Normal} that \(K\) is an almost
direct product \(K = G_{i_1} \cdots G_{i_l}\) with \(1 \leq i_1 < \cdots < i_l \leq k\)
and each \(G_{i_j}\) compact.

Thus a \emph{connected semisimple Lie group without compact factors} is a connected
semisimple Lie group all of whose connected, normal, compact subgroups are trivial.  We
remark that there is an equivalent definition in the literature requiring instead that all
compact quotient groups of \(G\) are trivial.

 Following~\cite{Raghunathan:Discrete}*{Definition~5.20, p.~86},
we  call a lattice \(\Gamma \subset G\) 
\emph{irreducible} if there are no two normal, connected, infinite subgroups \(H_1, H_2
\subset G\) such that \(G\) is the almost direct product of \(H_1\) and \(H_2\) and such
that \((\Gamma \cap H_1)\cdot (\Gamma \cap H_2)\) has finite index in \(\Gamma\).

\begin{theorem} \label{thm:latticecohomology} Let \(G\) be a connected semisimple Lie
  group without compact factors and let \(\Gamma \subset G\) be an irreducible lattice.
  Suppose that \(G\) is not locally isomorphic to \(\textup{SL}(2, \R)\) or
  \(\textup{SL}(2, \C)\).  Then \(H^1(\Gamma, \mathfrak{g}) = 0\).
\end{theorem}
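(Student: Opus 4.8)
The plan is to argue by a case distinction on the real rank of $G$, reducing in each case to a known infinitesimal rigidity theorem for lattices. First I would reduce to the case that $\mathfrak g$ is simple. By the almost direct product decomposition~\eqref{almost_direct_product}, $G = G_1 \cdots G_k$ with the $G_i$ connected normal and the $\mathfrak g_i$ simple; since $G$ has no compact factors, no $\mathfrak g_i$ is of compact type, so each $\mathfrak g_i$ has real rank at least one and $G$ has real rank at least $k$. Hence, if $G$ has real rank one then $k = 1$ and $\mathfrak g$ is simple, and it suffices to treat separately (a) $G$ of real rank at least two, and (b) $\mathfrak g$ simple of real rank one. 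In both cases $\Gamma$ is finitely generated, being a lattice in a semisimple Lie group without compact factors, and by the Borel density theorem $\Gamma$ is Zariski-dense in $G$; writing $\iota \colon \Gamma \hookrightarrow G$ for the inclusion and identifying $\mathfrak g$ with the Lie algebra of the adjoint group $\textup{Aut}(\mathfrak g)^{\circ}$, it follows that $\textup{Ad} \circ \iota \colon \Gamma \to \textup{Aut}(\mathfrak g)^{\circ}$ has Zariski-dense and non-precompact image.

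In case (a), and also for the groups $\textup{Sp}(n,1)$ with $n \ge 2$ and $F_{4(-20)}$ that occur in case (b), the vanishing $H^1(\Gamma, \mathfrak g) = 0$ is a known consequence of superrigidity: Margulis superrigidity in case (a), and its archimedean version due to Corlette (complemented in the non-archimedean range by Gromov--Schoen) for $\textup{Sp}(n,1)$ and $F_{4(-20)}$. The underlying mechanism is that a sufficiently small deformation $\varphi$ of $\textup{Ad} \circ \iota$ still has Zariski-dense, non-precompact image --- here one uses that $\Gamma$ is finitely generated --- and therefore extends, by superrigidity, to a continuous homomorphism $G \to \textup{Aut}(\mathfrak g)^{\circ}$; since the set of continuous homomorphisms from a connected semisimple Lie group into a given Lie group is discrete modulo conjugation, $\varphi$ is a trivial deformation, and this forces $H^1(\Gamma, \mathfrak g) = 0$. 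Alternatively, this vanishing may simply be quoted from the literature.

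It remains, in case (b), to handle the groups $\textup{SO}(n,1)$ with $n \ge 4$ and $\textup{SU}(n,1)$ with $n \ge 2$; this is where the hypothesis is used, since $\textup{SO}(2,1)$ and $\textup{SU}(1,1)$ are locally isomorphic to $\textup{SL}(2,\R)$ while $\textup{SO}(3,1)$ is locally isomorphic to $\textup{SL}(2,\C)$. For these two families superrigidity is unavailable, and I would instead appeal to the analytic infinitesimal rigidity theorems. For cocompact $\Gamma$ the vanishing $H^1(\Gamma, \mathfrak g) = 0$ is the Calabi--Weil rigidity theorem for closed real hyperbolic manifolds of dimension at least three, respectively the Calabi--Vesentini rigidity theorem for closed complex hyperbolic manifolds of complex dimension at least two --- both proved by identifying $H^1(\Gamma, \mathfrak g)$ with a space of harmonic forms on the compact locally symmetric quotient and showing, by a Bochner--Weitzenb\"ock identity, that the relevant curvature operator is positive definite. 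For non-cocompact $\Gamma$ I would invoke the extension of these vanishing theorems to lattices of finite covolume due to Garland--Raghunathan, whose reduction theory for rank-one lattices of finite covolume --- other than those in $\textup{SL}(2,\R)$ and $\textup{SL}(2,\C)$ --- reduces the claim to the rigid geometry of the finitely many cusps. Assembling cases (a) and (b) proves the theorem.

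The step I expect to be the main obstacle is the non-cocompact rank-one case. There neither Margulis-type superrigidity nor the harmonic-forms arguments tailored to closed manifolds apply off the shelf, and one genuinely needs the finite-covolume refinement: the Garland--Raghunathan reduction theory must be used to show that a deformation of such a lattice is controlled by its restrictions to the cusp subgroups and that those restrictions cannot be deformed once the rank is large enough. It is exactly the failure of this rigidity in the smallest cases --- visible in the Teichm\"uller deformations of hyperbolic surface groups and the hyperbolic Dehn-surgery deformations of cusped hyperbolic $3$-manifolds --- that accounts for the exclusion of $\textup{SL}(2,\R)$ and $\textup{SL}(2,\C)$ from the statement.
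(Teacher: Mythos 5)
Your proposal is correct in substance and follows the same overall strategy as the paper: reduce to the dichotomy ``higher rank versus simple of rank one'' (using, exactly as the paper does, that a rank-one semisimple Lie group without compact factors is simple) and then invoke known vanishing theorems for \(H^1(\Gamma, \mathfrak{g})\). The paper compresses the second step into a citation of \cite{Vinberg-et-al:Discrete}*{Corollary~7.5 and Theorem~7.7}, which packages precisely the results you list: the higher-rank vanishing coming from Margulis' theory, the Weil/Calabi--Vesentini rigidity for cocompact rank-one lattices, and Garland--Raghunathan \cite{Garland-Raghunathan:Fundamental} for the non-uniform rank-one case. Your finer attribution to the original sources is accurate, and you correctly identify both the role of irreducibility in the higher-rank case and the reason the two exceptional groups are excluded (note that \(\textup{SL}(2,\C)\) needs to be excluded only because of non-uniform lattices; closed hyperbolic \(3\)-manifolds are infinitesimally rigid by Weil's theorem).

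One caveat about the ``underlying mechanism'' you sketch in case (a): the implication ``every sufficiently small deformation of \(\textup{Ad} \circ \iota\) is trivial, and this forces \(H^1(\Gamma, \mathfrak{g}) = 0\)'' is not valid as stated. Weil's theorem goes in the opposite direction (\(H^1 = 0\) implies local rigidity); openness of the conjugation orbit in \(\textup{Hom}(\Gamma, G)\) does not by itself kill \(H^1\), because a nontrivial cohomology class may be an obstructed, non-integrable infinitesimal deformation that is tangent to no actual path of representations. To extract the cohomological vanishing from superrigidity one must argue differently --- for instance by applying a superrigidity statement to homomorphisms into the affine group \(G \ltimes_{\textup{Ad}} \mathfrak{g}\) determined by a cocycle, or by quoting the cohomology vanishing theorems of Margulis and Raghunathan directly, as you offer to do and as the paper in fact does. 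With that fallback your plan closes; without it, that one step is a genuine gap.
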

\begin{proof}
  Since the group \(G_i\) appearing in~\eqref{almost_direct_product} is compact if and
  only if \(G_i\) has real rank zero, we see that a rank one semisimple Lie group without
  compact factors is actually simple.  Therefore Theorem~\ref{thm:latticecohomology} is a
  combination of~\cite{Vinberg-et-al:Discrete}*{Corollary~7.5 and Theorem~7.7, p.~99}.
\end{proof}


\section{Proof of Theorem~\ref{thm:mainthm}}

\noindent We first prove the semisimple case of Theorem~\ref{thm:mainthm} and then show how to deduce the result in
general.

\begin{proposition}\label{prop:FJsemisimp}
  Let \(G\) be a connected semisimple Lie group and let \(\Gamma \subset G\) be a lattice.
  Then \(\Gamma \in \mathfrak{FJ}\).
\end{proposition}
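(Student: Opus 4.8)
The plan is to reduce from an arbitrary lattice in a connected semisimple Lie group $G$ to the irreducible, torsion-controlled, linear-algebraic situation where Theorems~\ref{thm:latticerational} and~\ref{thm:latticecohomology} apply, and then invoke the $S$-arithmetic case \eqref{the:FJinh:arith} together with the inheritance properties \eqref{the:FJinh:prod}, \eqref{the:FJinh:sub}, \eqref{the:FJinh:over} of Theorem~\ref{the:FJinh}. First I would pass to $G' = G/(\text{maximal compact factor})$; a compact factor $K$ meets $\Gamma$ in a finite subgroup (a discrete subgroup of a compact group), and $\Gamma$ maps to a lattice $\Gamma'$ in $G'$ with finite kernel, so by \eqref{the:FJinh:hom} (or directly, since a central extension of $\Gamma'$ by a finite group of a group in $\mathfrak{FJ}$ lies in $\mathfrak{FJ}$) it suffices to treat $\Gamma'$; hence we may assume $G$ has no compact factors. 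Next, by Borel's density theorem the lattice need not be irreducible, but a standard argument (see Raghunathan) shows $\Gamma$ has a finite-index subgroup $\Gamma_0$ which splits as an almost direct product $\Gamma_0 = \Gamma_1 \cdots \Gamma_r$ of irreducible lattices $\Gamma_i$ in the corresponding almost-direct factors $G_i$ of $G$; by \eqref{the:FJinh:over} it suffices to prove $\Gamma_0 \in \mathfrak{FJ}$, and since $\Gamma_0$ is commensurable to a quotient of $\Gamma_1 \times \cdots \times \Gamma_r$ by a central (finite) subgroup, using \eqref{the:FJinh:prod}, \eqref{the:FJinh:hom} and \eqref{the:FJinh:over} again it suffices to prove each $\Gamma_i \in \mathfrak{FJ}$. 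Thus we are reduced to: $G$ connected semisimple without compact factors, $\Gamma \subset G$ an irreducible lattice.

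Now split into the low-rank exceptional cases and the generic case. If $G$ is locally isomorphic to $\mathrm{SL}(2,\R)$ or $\mathrm{SL}(2,\C)$, then after passing to a finite-index subgroup $\Gamma$ is the fundamental group of a (possibly non-compact) hyperbolic surface or hyperbolic $3$-manifold, hence of a manifold of dimension $\le 3$, so $\Gamma \in \mathfrak{FJ}$ by \eqref{the:FJinh:low} and \eqref{the:FJinh:over}; alternatively one can use that such lattices are virtually free or are $\mathrm{CAT}(0)$ / hyperbolic and invoke \eqref{the:FJinh:CAT}. (One must also handle products of $\mathrm{SL}(2,\R)$/$\mathrm{SL}(2,\C)$ factors, but irreducibility forces a single such factor unless the lattice is arithmetic of Hilbert-modular type, which is covered by \eqref{the:FJinh:arith}.) In the remaining generic case, $\Gamma$ is a finitely generated discrete subgroup (lattices in semisimple groups are finitely generated), and Theorem~\ref{thm:latticecohomology} gives $H^1(\Gamma,\mathfrak g) = 0$. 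Writing $G$ as (the identity component of) the real points of a linear algebraic $\Q$-group $\Gg$ — here one uses that a connected semisimple Lie group without compact factors has a linear finite cover, or passes to $G/Z$ which is the adjoint group $\mathrm{Ad}(G) \subset \mathrm{GL}(\mathfrak g)$ defined over $\Q$ — Theorem~\ref{thm:latticerational} provides a number field $\mathbb F$ and $g \in \Gg(\R)$ with $g\Gamma g^{-1} \subset \Gg(\mathbb F)$. Conjugation is an isomorphism, so it suffices to treat $\Gamma \subset \Gg(\mathbb F)$.

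Finally, a finitely generated subgroup of $\Gg(\mathbb F) \subset \mathrm{GL}(N,\mathbb F)$ involves only finitely many denominators, so it lies in $\mathrm{GL}(N, \mathcal O_S)$ for the ring of $S$-integers of $\mathbb F$ at a suitable finite set $S$ of places; restriction of scalars then embeds it in $\mathrm{GL}(M,\Z_T)$ for a finite set of rational primes $T$, i.e.\ in an $S$-arithmetic group. Hence $\Gamma \in \mathfrak{FJ}$ by \eqref{the:FJinh:arith} and \eqref{the:FJinh:sub}. The main obstacle I anticipate is not any single deep input — those are quoted — but the bookkeeping of the reductions: correctly arranging the passage to finite-index subgroups and central quotients so that irreducibility is achieved while staying inside $\mathfrak{FJ}$ via \eqref{the:FJinh:hom}, \eqref{the:FJinh:over}, and \eqref{the:FJinh:prod}, and ensuring that the exceptional $\mathrm{SL}(2)$-factors (which must be excised before applying Theorem~\ref{thm:latticecohomology}) are handled separately yet compatibly with the product decomposition. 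A secondary subtlety is realizing $G$, or a group commensurable to $\Gamma$, inside the $\Q$-points of an algebraic group so that Theorem~\ref{thm:latticerational} is actually applicable; passing to the adjoint group resolves this at the cost of another finite central kernel, absorbed by \eqref{the:FJinh:hom}.
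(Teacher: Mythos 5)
Your overall route is the same as the paper's: strip off the maximal compact factor, decompose into irreducible lattices, treat the \(\mathrm{SL}(2,\R)\)/\(\mathrm{SL}(2,\C)\) cases separately, and in the generic case combine Theorem~\ref{thm:latticecohomology} with Theorem~\ref{thm:latticerational} and restriction of scalars. Your endgame via \(S\)-arithmetic groups \eqref{the:FJinh:arith} instead of \(\Gg(\mathbb{F})\subset\mathrm{res}_{\mathbb{F}/\Q}(\Gg)(\Q)\subset\mathrm{GL}(n,\Q)\) and \eqref{the:FJinh:GL} is an unproblematic variant, and your handling of the exceptional cases via \eqref{the:FJinh:low} plus Selberg's lemma and \eqref{the:FJinh:over} works just as well as the paper's appeal to CAT(0). (Your worry about products of \(\mathrm{SL}(2)\)-factors is unnecessary: Theorem~\ref{thm:latticecohomology} only excludes \(G\) \emph{locally isomorphic} to a single copy of \(\mathrm{SL}(2,\R)\) or \(\mathrm{SL}(2,\C)\), so such products fall under the generic case.)

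There is, however, one genuine gap, precisely at the spot you flagged as delicate. You reduce to the irreducible case by observing that \(\Gamma_0=\Gamma_1\cdots\Gamma_r\) is a quotient of \(\Gamma_1\times\cdots\times\Gamma_r\) by a central subgroup and assert that \eqref{the:FJinh:prod}, \eqref{the:FJinh:hom} and \eqref{the:FJinh:over} finish the reduction. They do not: \eqref{the:FJinh:hom} transfers membership in \(\mathfrak{FJ}\) from the \emph{target} of a homomorphism to its \emph{source}, so knowing \(\Gamma_1\times\cdots\times\Gamma_r\in\mathfrak{FJ}\) says nothing about its quotient \(\Gamma_0\); closure under quotients by finite (or central) normal subgroups is not among the properties listed in Theorem~\ref{the:FJinh}. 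Moreover, the kernel of \(\Gamma_1\times\cdots\times\Gamma_r\to\Gamma_0\) need not even be finite when \(Z(G)\) is infinite (e.g.\ for groups covered by \(\widetilde{\mathrm{SL}(2,\R)}\)-factors). The paper's fix is to insert a reduction to trivial center \emph{before} the decomposition: \(Z(G)\Gamma\) is discrete, \(\Gamma\cap Z(G)\) is a finitely generated abelian normal subgroup of finite index in \(Z(G)\), \(p(\Gamma)\) is a lattice in \(G/Z(G)\), and preimages of virtually cyclic subgroups are virtually solvable, so \eqref{the:FJinh:solv}, \eqref{the:FJinh:over} and \eqref{the:FJinh:hom} allow one to assume \(Z(G)=1\); then discrete normal subgroups of the connected group \(G\) are trivial, the almost direct products \(G=H_1\cdots H_r\) and \(\Gamma_1\cdots\Gamma_r\) become honest direct products, and \eqref{the:FJinh:prod}, \eqref{the:FJinh:sub} apply directly. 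You do invoke ``passing to the adjoint group'' at the very end to make Theorem~\ref{thm:latticerational} applicable, which is essentially this same reduction, but it must happen before the splitting into irreducible pieces for the argument to close.
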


\begin{proof}
  Let \(K \subset G\) be the maximal compact factor.  Consider the short exact sequences
  \[
  \xymatrix{1\ar[r]& K\ar[r]& G\ar[r]^p & G/K\ar[r]& 1\\
    1\ar[r]& \Gamma\cap K \ar[r]\ar@{^{(}->}[u]& \Gamma \ar[r]\ar@{^{(}->}[u]& p(\Gamma) \ar[r]\ar@{^{(}->}[u]& 1.}
   \]
  By~\cite{Vinberg-et-al:Discrete}*{Corollary~4.10, p.~24} the image \(p(\Gamma)\) of
  \(\Gamma\) is a lattice in \(G/K\).  The group \(\Gamma \cap K\) is finite because \(K\)
  is compact and \(\Gamma\) is discrete.  Thus any preimage of a virtually cyclic subgroup
  is again virtually cyclic and hence lies in \(\mathfrak{FJ}\), for example by
  Theorem~\ref{the:FJinh}~\eqref{the:FJinh:CAT}.  It follows from
  Theorem~\ref{the:FJinh}~\eqref{the:FJinh:hom} that the lattice \(\Gamma\) lies in
  \(\mathfrak{FJ}\) if \(p(\Gamma)\) does.  Thus we may assume that \(G\) has no compact
  factors.

  Let \(Z(G)\) denote the center of \(G\).  It follows now 
  from~\cite{Raghunathan:Discrete}*{Corollary~5.17, p.~84} that the product \(Z(G)\Gamma\) is
  discrete and in particular closed in \(G\) because \(G\) is a Hausdorff group.  Since
  the center \(Z(G)\) is a normal subgroup, we obtain 
from~\cite{Raghunathan:Discrete}*{Theorem~1.13, p.~23} that \(\Gamma \cap Z(G)\) is a lattice in
  \(Z(G)\), which in this case just means it has finite index.  
  Thus~\cite{Onishchik:EncyclopaediaI}*{Theorem~4.7, p.~23} says that \(\Gamma\) projects
  under \(p \colon G \rightarrow G/Z(G)\) to a lattice \(p(\Gamma) \subset G/Z(G)\).
  Moreover, the center \(Z(G)\) is an abelian group so that any preimage under \(p\) of a
  virtually cyclic subgroup in \(G/Z(G)\) is virtually solvable, thus lies in
  \(\mathfrak{FJ}\) by Theorem~\ref{the:FJinh}~\eqref{the:FJinh:solv} and~\eqref{the:FJinh:over}.   
  Again by  Theorem~\ref{the:FJinh}~\eqref{the:FJinh:hom} we may assume that \(G\) has trivial
  center.

  We conclude from~\cite{Raghunathan:Discrete}*{Theorem~5.22, p.~86} that we have an
  almost direct product decomposition \(G = H_1 \cdots H_r\) such that the almost direct
  product \(\Gamma_1 \cdots \Gamma_r\) has finite index in \(\Gamma\) where 
  \(\Gamma_i =   \Gamma \cap H_i\) is an irreducible lattice in \(H_i\) for each \(i = 1, \ldots, r\).
  Recall that discrete normal subgroups of connected topological groups are central.  Since \(G\) has 
trivial center, both almost direct products are actually direct products.  Thus by
  Theorem~\ref{the:FJinh}~\eqref{the:FJinh:prod} and~\eqref{the:FJinh:sub} we can assume
  that \(\Gamma\) is an irreducible lattice in a connected semisimple Lie group \(G\) with
  trivial center and without compact factors.

  Suppose \(G\) was locally isomorphic to \(\textup{SL}(2, \R)\) or \(\textup{SL}(2,
  \C)\).  Since \(G\) has trivial center, \(G\) is actually globally isomorphic to
  \(\textup{PSL}(2, \R)\) or \(\textup{PSL}(2, \C)\).  Thus \(\Gamma\) acts properly with
  finite volume quotient on hyperbolic 2- or 3-space.  
  Therefore~\cite{Bridson-Haefliger:MetricSpaces}*{Corollary~11.28, p.~362} asserts that \(\Gamma\)
  is \(\textup{CAT}(0)\) whence in \(\mathfrak{FJ}\) by
  Theorem~\ref{the:FJinh}~\eqref{the:FJinh:CAT}.  So we may assume that \(G\) is neither
  locally isomorphic to \(\textup{SL}(2, \R)\) nor to \(\textup{SL}(2, \C)\).  By
  Theorem~\ref{thm:latticecohomology} we then have \(H^1(\Gamma, \mathfrak{g}) = 0\).  Moreover, 
the lattice \(\Gamma \subset G\) is finitely generated, see~\cite{Raghunathan:Discrete}*{Remark~13.21, p.~210}.

  Let \(\mathfrak{g}\) be the Lie algebra of \(G\).  The adjoint representation
  \(\textup{Ad} \colon G \rightarrow \textup{Aut}(\mathfrak{g})\) embeds \(G\) as the Lie
  subgroup of inner automorphisms \(\textup{Int}(\mathfrak{g}) \subset
  \textup{Aut}(\mathfrak{g})\) as is shown in~\cite{Helgason:Symmetric}*{5.2.(ii),
    p.~129}.  Since \(\mathfrak{g}\) is semisimple, the subgroup
  \(\textup{Int}(\mathfrak{g})\) is actually just the identity component of
  \(\textup{Aut}(\mathfrak{g})\) by~\cite{Helgason:Symmetric}*{Corollary~6.5, p.~132}.
  In addition it is well-known that a real semisimple Lie algebra admits a basis with
  rational structure constants~\cite{Borel:CliffordKlein}*{Proposition~3.7, p.~118}.  It
  follows that \(\Gg = \textup{Aut}(\mathfrak{g}) \subset
  \textup{GL}(\mathfrak{g})\) is a linear algebraic \(\Q\)-group and 
  \(G =  \Gg(\R)^0\).

  Finally, Theorem~\ref{thm:latticerational} asserts that \(\Gamma\) is conjugate to a
  subgroup of the \(\mathbb{F}\)-rational points \(\Gg(\mathbb{F})\) for a number field
  \(\mathbb{F}\).  By restriction of scalars~\cite{Platonov-Rapinchuk:AlgebraicGroups}*{Section~2.1.2} there exists a linear
  algebraic \(\Q\)-group \(\textup{res}_{\mathbb{F}/\Q}(\Gg)\) such that \(\Gg(\mathbb{F})
  = \textup{res}_{\mathbb{F} / \Q}(\Gg)(\Q) \subset \textup{GL}(n, \Q)\).
  Theorem~\ref{the:FJinh}~\eqref{the:FJinh:GL} and \eqref{the:FJinh:sub} completes the proof.
\end{proof}

\begin{remark}
  The above proof starts by showing that we can assume the Lie group \(G\) is connected,
  semisimple, center-free and has no compact factors while the lattice \(\Gamma\) is
  irreducible.  If one additionally requires that \(G\) has real rank at least two, then
  the assumptions of Margulis' famous arithmeticity theorem
  \cite{Margulis:Arithmeticity}*{Theorem~1} are satisfied which says in particular that
  \(\Gamma\) virtually embeds into \(\textup{GL}(n, \Z)\).  The Farrell--Jones conjecture
  for \(\Gamma\) then follows from \cite{Bartels:glnz} with no more detour.  The existence
  of nonarithmetic lattices in real rank one Lie groups, however, necessitates our
  appealing to the more classical local rigidity theory.  The latter makes weaker
  assumptions on \(G\) at the cost of the weaker conclusion \(\Gamma \subset
  \textup{GL}(n, \Q)\).  But this turns out to be good enough for our purposes.
\end{remark}

\begin{proof}[Proof of Theorem~\ref{thm:mainthm}.]
  We will mostly follow~\cite{Bartels-Farrell-Lueck:Cocompact}*{Proof of Proposition~5.1,
    p.~38}. We will prove this by induction on the dimension of the surrounding Lie group
  $G$. If $G$ is zero-dimensional, $\Gamma$ is a finite group and thus trivially satisfies
  the Farrell--Jones conjecture. Since \(G\) has finitely many path components,
  \(\Gamma\cap G^0\) has finite index in \(\Gamma\) and is a lattice in \(G^0\) so that we
  may assume that \(G\) is connected by Theorem~\ref{the:FJinh}~\eqref{the:FJinh:over}.
  Arguing as in the first part of the proof of Proposition~\ref{prop:FJsemisimp}, 
  we may moreover assume that \(G\) has no connected, compact, normal subgroup.

  Let \(R\) be the \emph{radical} of \(G\) given by the maximal connected normal solvable
  subgroup of \(G\).  Similarly we denote by \(N\) the \emph{nilradical} of \(G\) given by
  the maximal connected normal nilpotent subgroup of \(G\).  Clearly \(N \trianglelefteq  R\) 
   and \(R/N\) is abelian.  Recall from~\cite{Varadarajan:LieGroups}*{Theorem~3.18.13,
    p.~244} that \(G\) possesses maximal semisimple subgroups, any such two are
  conjugate, and for any such \(S \subset G\) we have the \emph{generalized Levi decomposition} \(G = RS\). 
  As a word of warning, in general neither \(S\) nor the
  intersection \(R \cap S\) is a closed subgroup of \(G\). An example of such an \(S\) is given by Alain Valette in \cite{MOAnswer}.

  We want to prove that \(\Gamma \cap N\) is a lattice in \(N\).  According 
  to~\cite{Vinberg-et-al:Discrete}*{Theorem~1.6, p.~106} a sufficient condition is that
  every compact factor of \(S\) acts non-trivially on \(R\).  Suppose \(K\) was a compact
  factor in \(S\) acting trivially on \(R\).  Any element $g\in G$ is of the form $g=rs$
  with \(r \in R\) and \(s \in S\).  We get  for \(k\in K\) that $sks^{-1} \in K$ and $(sks^{-1})^{-1}r(sks^{-1})  = r$
  and hence \(gkg^{-1}=   r(sks^{-1})r^{-1}=sks^{-1} \in K\).  Thus \(K\) is normal in \(G\) whence trivial.  It
  follows that \(\Gamma \cap N\) is a lattice in \(N\).  
   By~\cite{Vinberg-et-al:Discrete}*{Theorem~4.7, p.~23\footnote{Note that Theorem~4.7 in
      loc.\,cit.\,is obviously misprinted.  The conclusion should read 
    ``\ldots if and  only if \(\Gamma \cap H\) is a lattice in \(H\).''}} \label{footnote} we conclude
  that \(\Gamma / (\Gamma \cap N)\) is a lattice in \(G/N\).  We have the short exact
  sequence
  \[ 
   1 \rightarrow \Gamma \cap N \rightarrow \Gamma\stackrel{p}{\rightarrow} \Gamma/
  (\Gamma \cap N) \rightarrow 1. 
   \] 
   Since \(G\) has no connected, compact, normal
  subgroup, the nilradical is simply 
  connected~\cite{Greenleaf-Moskowitz-Rothschild:UnboundedConjugacy}*{Lemma~3.1.(i), p.~229}.  
  It  follows from~\cite{Raghunathan:Discrete}*{Proposition~3.7, p.~52} that the lattice
  \(\Gamma \cap N \subset N\) is \emph{poly-\(\Z\)}, i.e., polycyclic with infinite cyclic
  factor groups.  Hence the preimage of a virtually cyclic subgroup of
  \(\Gamma/(\Gamma\cap N)\) under \(p\) is virtually poly-\(\Z\) as well, thus lies in
  \(\mathfrak{FJ}\) by Theorem~\ref{the:FJinh}~\eqref{the:FJinh:solv} and~\eqref{the:FJinh:over}. Now
  if \(N\) is non-trivial, then the Lie group \(G/N\) is of lower dimension than \(G\) and
  hence \(\Gamma / (\Gamma \cap N) \in \mathfrak{FJ}\) by the induction hypothesis.
  Therefore \(\Gamma \in \mathfrak{FJ}\) by
  Theorem~\ref{the:FJinh}~\eqref{the:FJinh:hom}.  If on the other hand \(N\) is trivial,
  then \(R \cong R/N\) is abelian, so \(R\) is contained in the nilradical \(N\) and thus
  trivial.  Therefore \(G\) is semisimple and Proposition~\ref{prop:FJsemisimp} completes
  the proof.
\end{proof}

\begin{remark}
  According to~\cite{Raghunathan:Discrete}*{Corollary~8.28, p.~150} the criterion that no
  compact factor of \(S\) acts trivially on \(R\) is actually sufficient for \(\Gamma /
  \Gamma \cap R\) being a lattice in \(G/R\).  This result would spare us the detour of
  factoring out the nilradical and using induction on \(\dim G\).  However,
  A.~N.~Starkov~\cite{Starkov:Counterexample} claims to construct a counterexample to
  Corollary~8.28, which earned him a doubtful Mathematical Review and a follow-up paper by
  T.~S.~Wu~\cite{Wu:Note} counterclaiming to give a new proof of the result in question.
  On the other hand, E.~B.~Vinberg et.~al.~\cite{Vinberg-et-al:Discrete}*{p.~107} say
  the counterexample of Starkov is correct.  We refrain from taking sides in the
  discussion and prefer to give our more involved but safe argument.
\end{remark}


\section{Generalizations}

\noindent Two assumptions in Theorem~\ref{thm:mainthm} can still be relaxed.  Firstly, the notion of
lattice still makes sense for locally compact Hausdorff groups because of the existence of
a unique Haar measure.  Secondly, one can try and work with less restrictive connectivity.
Here is the most general result we could come up with.

\begin{theorem}[Lattices in second countable locally compact Hausdorff groups]
  \label{thm:mostgeneral}
  Let $G$ be a second countable locally compact Hausdorff group and let $\Gamma$ be a
  lattice in $G$. If $\pi_0(G)$ is discrete and lies in $\mathfrak{FJ}$, then $\Gamma$
  also lies in $\mathfrak{FJ}$.
\end{theorem}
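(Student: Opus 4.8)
The plan is to reduce first to the case where \(G\) is a Lie group, and then to run an induction on \(\dim G\) that parallels the proofs of Proposition~\ref{prop:FJsemisimp} and Theorem~\ref{thm:mainthm}, dragging an extra infinite cyclic factor along. For the reduction to the Lie case: since \(\pi_0(G)\) is discrete, \(G^0\) is open in \(G\). By the Cartan--Iwasawa--Malcev theorem every compact subgroup of the connected locally compact group \(G^0\) is conjugate into a fixed maximal compact subgroup, so every compact \emph{normal} subgroup of \(G^0\) lies in that maximal compact subgroup; hence \(G^0\) has a largest compact normal subgroup \(M\), and \(G^0/M\) is a connected locally compact group without nontrivial compact normal subgroup, which by the Gleason--Yamabe theorem is a Lie group. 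Being canonically defined, \(M\) is characteristic in \(G^0\), hence normal in \(G\) (as \(G^0 \trianglelefteq G\)), and \(G/M\) is a Lie group with \(\pi_0(G/M) = \pi_0(G)\). Since \(M\) is compact and \(\Gamma\) discrete, \(\Gamma \cap M\) is finite and the image \(\overline\Gamma\) of \(\Gamma\) in \(G/M\) is discrete, hence a lattice; once the theorem is known for Lie groups, \(\overline\Gamma \in \mathfrak{FJ}\), and as \(\Gamma \cap M\) is finite every preimage of a virtually cyclic subgroup is virtually cyclic, so Theorem~\ref{the:FJinh}~\eqref{the:FJinh:hom} together with~\eqref{the:FJinh:CAT} gives \(\Gamma \in \mathfrak{FJ}\). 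Thus I may assume \(G\) is a Lie group.

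Now I would induct on \(\dim G\). The case \(\dim G = 0\) is immediate, since then \(\Gamma\) embeds into \(G = \pi_0(G) \in \mathfrak{FJ}\) and one applies Theorem~\ref{the:FJinh}~\eqref{the:FJinh:sub}. For \(\dim G > 0\), set \(\Gamma_0 := \Gamma \cap G^0\), a lattice in the connected Lie group \(G^0\); then \(\Gamma/\Gamma_0\) embeds into \(\pi_0(G)\), so it lies in \(\mathfrak{FJ}\), and by Theorem~\ref{the:FJinh}~\eqref{the:FJinh:hom} applied to \(f \colon \Gamma \to \Gamma/\Gamma_0\) it is enough to show that \(\Gamma_0 \in \mathfrak{FJ}\) and that \(f^{-1}(Z) \in \mathfrak{FJ}\) for every infinite cyclic \(Z \subseteq \Gamma/\Gamma_0\). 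The first is Theorem~\ref{thm:mainthm}. For the second, pick \(g \in \Gamma\) mapping to a generator of \(Z\); since \(Z\) is free the extension \(1 \to \Gamma_0 \to f^{-1}(Z) \to Z \to 1\) splits, so \(f^{-1}(Z) \cong \Gamma_0 \rtimes_\phi \Z\), where \(\phi\) is conjugation by \(g\) and extends to \(\widetilde\phi \in \textup{Aut}(G^0)\). I would then remove, one at a time, the characteristic subgroups of \(G^0\) used in the proofs of Proposition~\ref{prop:FJsemisimp} and Theorem~\ref{thm:mainthm} — the maximal connected compact normal subgroup, then the nilradical, then the center — each of which is \(\widetilde\phi\)-invariant and hence normal in \(G^0 \rtimes_{\widetilde\phi} \Z\); passing to the corresponding quotient of \(G^0 \rtimes_{\widetilde\phi} \Z\) and its lattice (which is again a lattice by the very results from the literature invoked in those proofs), the kernel of the induced map of lattices is, respectively, finite, poly-\(\Z\), or abelian, so preimages of virtually cyclic subgroups are virtually cyclic, virtually poly-\(\Z\), or virtually solvable, and Theorem~\ref{the:FJinh}~\eqref{the:FJinh:hom} applies with~\eqref{the:FJinh:CAT},~\eqref{the:FJinh:solv} and~\eqref{the:FJinh:over}; whenever the removed subgroup has positive dimension the quotient is a Lie group of smaller dimension with component group \(\Z \in \mathfrak{FJ}\), and I invoke the induction hypothesis. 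This leaves the case that \(G^0\) is semisimple, center free, and without compact factors.

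In that case, exactly as in the proof of Proposition~\ref{prop:FJsemisimp}, \(A := \textup{Aut}(\mathfrak{g}) = \textup{Aut}(G^0)\) is a linear algebraic group with \(A^0 = G^0\) and \(A/A^0\) finite, so \(\Gamma_0\) is a lattice in \(A\) and \(\widetilde\phi \in A\) normalizes \(\Gamma_0\). The key observation is that the normalizer \(N := N_A(\Gamma_0)\) is discrete: its identity component normalizes the discrete group \(\Gamma_0\) and, being connected, therefore centralizes it, hence centralizes the Zariski closure of \(\Gamma_0\), which by the Borel density theorem equals \(G^0\), and \(G^0\) is center free. A discrete subgroup containing a lattice is itself a lattice and contains it with finite index, so \(H := \langle \Gamma_0, \widetilde\phi \rangle\) contains \(\Gamma_0\) with finite index and thus \(H \in \mathfrak{FJ}\) by Theorem~\ref{the:FJinh}~\eqref{the:FJinh:over}. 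Now \(q \colon \Gamma_0 \rtimes_\phi \Z \to H\), \((\gamma, n) \mapsto \gamma\, \widetilde\phi^{\,n}\), is a surjective homomorphism whose kernel is infinite cyclic, generated by \((\widetilde\phi^{-m}, m)\) with \(m\) the order of \(\widetilde\phi\Gamma_0\) in the finite group \(N/\Gamma_0\). The preimage under \(q\) of an infinite cyclic subgroup of \(H\) is an extension of \(\Z\) by \(\Z\), hence solvable, so Theorem~\ref{the:FJinh}~\eqref{the:FJinh:hom} together with~\eqref{the:FJinh:solv} yields \(\Gamma_0 \rtimes_\phi \Z \in \mathfrak{FJ}\), which completes the induction.

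The hard part, I expect, is precisely this last step. The class \(\mathfrak{FJ}\) is not known to be closed under extensions by \(\Z\), so \(f^{-1}(Z) \cong \Gamma_0 \rtimes_\phi \Z\) cannot be treated by formal inheritance properties alone; one has to exploit that an outer \(\Z\)-action on a lattice in a semisimple Lie group without compact factors is ``virtually inner'', and this is exactly what the Borel density argument for the finiteness of \(N_A(\Gamma_0)/\Gamma_0\) supplies. A subsidiary nuisance is the bookkeeping in the reduction to the semisimple case: one must check that each subgroup of \(G^0\) removed there is genuinely \(\widetilde\phi\)-invariant and that every group one passes to remains a lattice.
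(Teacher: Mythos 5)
Your proposal is correct, and its overall architecture coincides with the paper's: divide out the maximal compact normal subgroup of \(G^0\) to land in a Lie group (the paper cites Bagley--Peyrovian for its existence and for \(G^0/K\) being a Lie group, and makes explicit a point you gloss over, namely that second countability is needed so that \(G/K\), having only countably many components, is again a Lie group); then handle \(\pi_0(G)\) by Theorem~\ref{the:FJinh}~\eqref{the:FJinh:hom}, reducing to groups \(\Gamma_0\rtimes_\phi\Z\) with \(\phi\) extending to an automorphism of \(G^0\) (this is the paper's Theorem~\ref{thm:FJpi0}); and finally prove \(\Gamma_0\rtimes_\phi\Z\in\mathfrak{FJ}\) by induction on \(\dim G^0\), peeling off the nilradical and the compact factors exactly as you describe (the paper's Proposition~\ref{prop:FJconnauto}). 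The one place you genuinely diverge is the terminal case where \(G^0\) is semisimple without compact factors. The paper keeps the center, uses Lemma~\ref{lemma:OutG} to replace \(\phi\) by a power that is conjugation by some \(g\in G^0\), and then cites Greenleaf--Moskowitz for the finiteness of the Weyl group \(N_{G^0}(\Gamma_0)/\Gamma_0\) to pass to a further power that is conjugation by some \(\gamma\in\Gamma_0\), whence \(\Gamma_0\rtimes_{\phi^n}\Z\cong\Gamma_0\times\Z\) and Theorem~\ref{the:FJinh}~\eqref{the:FJinh:prod} finishes. You instead insert an extra (harmless) reduction killing the center, and then prove the relevant finiteness yourself: the normalizer of \(\Gamma_0\) in \(\textup{Aut}(\mathfrak g)\) is discrete because its identity component centralizes the Zariski-dense lattice \(\Gamma_0\) by Borel density and \(G^0\) has trivial center, so \(N_{\textup{Aut}(\mathfrak g)}(\Gamma_0)/\Gamma_0\) is finite; you then surject \(\Gamma_0\rtimes_\phi\Z\) onto \(\langle\Gamma_0,\widetilde\phi\rangle\) with infinite cyclic kernel and apply \eqref{the:FJinh:hom} once more instead of exhibiting a direct product. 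This is essentially an inlined proof of the Greenleaf--Moskowitz input (their finiteness result is itself a Borel-density argument), so your version is more self-contained at the cost of the additional center reduction; both are sound, and your closing remark correctly identifies the normalizer finiteness as the one non-formal ingredient.
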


\noindent We remark that the Hausdorff assumption for topological groups is often implicit in the
literature.  In fact a topological \(T_0\)-group is already Hausdorff.  For the proof of
Theorem~\ref{thm:mostgeneral} let us first recall some basic facts about the automorphism group
\(\textup{Aut}(G)\) of a connected Lie group \(G\).  Differentiation defines a
homomorphism of Lie groups \(\textup{d} \colon \textup{Aut}(G) \rightarrow
\textup{Aut}(\mathfrak{g})\) which is actually injective and has closed 
image~\cite{Onishchik:EncyclopaediaI}*{Proposition~4.1, p.~49}.  If \(G\) is simply-connected
or has trivial center, then this map is an isomorphism.  For the inner automorphisms
\(\textup{Int}(G)\) of \(G\) we obtain \(\textup{d} (\textup{Int}(G)) = \textup{Ad}(G)\)
by the very definition of the adjoint representation.  As remarked in the proof of
Proposition~\ref{prop:FJsemisimp} we have \(\textup{Ad}(G) = \textup{Int}(\mathfrak{g})\)
so that \(\textup{d}\) induces an injective group homomorphism 
\(\textup{Out}(G) \rightarrow \textup{Out}(\mathfrak{g})\) where 
\(\textup{Out}(G) = \textup{Aut}(G) / \textup{Int}(G)\) and 
\(\textup{Out}(\mathfrak{g}) = \textup{Aut}(\mathfrak{g}) / \textup{Int}(\mathfrak{g})\) 
denote the groups of outer automorphisms.  If \(G\) is
moreover semisimple, then we have seen in the same proof that 
\(\Gg = \textup{Aut}(\mathfrak{g})\) is a linear algebraic \(\Q\)-group with
\(\Gg(\R)^0 = \textup{Int}(\mathfrak{g})\).  By a theorem of
Whitney~\cite{Whitney:Elementary}*{Theorem~3, p.~547} a real algebraic variety has only
finitely many components in the ordinary topology.  Applying this result to the
\(\R\)-variety \(\Gg(\R)\) we have come to the following conclusion. 

\begin{lemma} \label{lemma:OutG} The group of outer automorphisms \(\textup{Out}(G)\) of a
  connected semisimple Lie group \(G\) is finite.
\end{lemma}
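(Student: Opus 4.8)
The plan is to exploit the injective homomorphism \(\textup{Out}(G) \to \textup{Out}(\mathfrak{g})\) assembled in the discussion preceding the lemma and to identify the target with the component group of a real algebraic group, which is finite by Whitney's theorem.

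First I would recall the facts already collected above: the differentiation map \(\textup{d} \colon \textup{Aut}(G) \to \textup{Aut}(\mathfrak{g})\) is injective with closed image and sends \(\textup{Int}(G)\) onto \(\textup{Ad}(G) = \textup{Int}(\mathfrak{g})\). Hence it descends to an injective group homomorphism \(\textup{Out}(G) = \textup{Aut}(G)/\textup{Int}(G) \hookrightarrow \textup{Aut}(\mathfrak{g})/\textup{Int}(\mathfrak{g}) = \textup{Out}(\mathfrak{g})\). It therefore suffices to prove that \(\textup{Out}(\mathfrak{g})\) is finite, since a subgroup of a finite group is finite.

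Next I would invoke, again from the preceding paragraph, that for semisimple \(\mathfrak{g}\) the group \(\Gg = \textup{Aut}(\mathfrak{g})\) is a linear algebraic \(\Q\)-group with \(\Gg(\R)^0 = \textup{Int}(\mathfrak{g})\), where the identity component is taken in the ordinary Hausdorff topology. Consequently \(\textup{Out}(\mathfrak{g}) = \Gg(\R)/\Gg(\R)^0 = \pi_0(\Gg(\R))\). Applying Whitney's theorem to the real algebraic variety \(\Gg(\R) \subset \textup{GL}(\mathfrak{g})\) shows that \(\Gg(\R)\) has only finitely many connected components in the ordinary topology, so \(\pi_0(\Gg(\R))\) is finite. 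Combining this with the embedding of the previous step yields that \(\textup{Out}(G)\) is finite.

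The only point that genuinely needs attention is the compatibility of the two identifications of the identity component: the quotient defining \(\textup{Out}(\mathfrak{g})\) uses \(\textup{Int}(\mathfrak{g})\), and one must know this equals the identity component \(\Gg(\R)^0\) in the ordinary topology rather than, say, the Zariski identity component — this was exactly what Helgason's Corollary~6.5 recorded, and it is what makes Whitney's component-counting applicable. Beyond this, the argument is pure bookkeeping with the facts already established, so I anticipate no real obstacle.
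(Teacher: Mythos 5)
Your argument is exactly the paper's: embed \(\textup{Out}(G)\) into \(\textup{Out}(\mathfrak{g})\) via the differentiation map, identify \(\textup{Int}(\mathfrak{g})\) with the identity component of the real algebraic group \(\textup{Aut}(\mathfrak{g})\) in the ordinary topology, and conclude finiteness of the component group from Whitney's theorem. The compatibility point you flag (ordinary versus Zariski identity component, via Helgason's Corollary~6.5) is precisely the ingredient the paper relies on, so the proposal is correct and matches the paper's proof.
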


\noindent We use this fact to draw the following conclusion.

\begin{proposition}\label{prop:FJconnauto}
  Let $\Gamma$ be a lattice in a connected Lie group $G$, and let $\varphi$ be an
  automorphism of $G$ with $\varphi(\Gamma)=\Gamma$. Then $\Gamma\rtimes_\varphi
  \mathbb{Z}$ lies in $\mathfrak{FJ}$.
\end{proposition}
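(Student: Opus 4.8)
Observe first that $\Gamma\rtimes_\varphi\mathbb{Z}$ is a lattice in the second countable locally compact group $G\rtimes_\varphi\mathbb{Z}$, whose group of components is $\mathbb{Z}$; thus the proposition is a special case of Theorem~\ref{thm:mostgeneral}. Since it is meant to feed into the proof of that theorem, the plan is to give a direct argument mimicking the proof of Theorem~\ref{thm:mainthm}: an induction on $\dim G$ carrying the automorphism $\varphi$ along. The crucial point is that the structural subgroups of a connected Lie group used there --- the maximal connected compact normal subgroup $K$ and the nilradical $N$ --- are \emph{characteristic}, hence preserved by $\varphi$. So $\varphi$ descends to automorphisms of $G/K$ and of $G/N$ preserving the respective images of $\Gamma$, and $\Gamma\cap K$, $\Gamma\cap N$ are $\varphi$-invariant normal subgroups of $\Gamma\rtimes_\varphi\mathbb{Z}$ because conjugation by a generator of the $\mathbb{Z}$-factor acts through $\varphi$.

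If $\dim G=0$ then $\Gamma\rtimes_\varphi\mathbb{Z}\cong\mathbb{Z}$ and we are done. For the inductive step I would first reduce, exactly as in the first part of the proof of Proposition~\ref{prop:FJsemisimp}, to the case that $G$ has no connected compact normal subgroup: passing to $G/K$ turns $\Gamma\rtimes_\varphi\mathbb{Z}$ into the quotient $(\Gamma K/K)\rtimes_{\overline{\varphi}}\mathbb{Z}$ with finite kernel $\Gamma\cap K$, so Theorem~\ref{the:FJinh}~\eqref{the:FJinh:hom} applies. Next, as in the proof of Theorem~\ref{thm:mainthm}, $\Gamma\cap N$ is a poly-$\mathbb{Z}$ lattice in $N$ and $\Gamma/(\Gamma\cap N)$ is a lattice in $G/N$, and from the short exact sequence
\[
1\longrightarrow\Gamma\cap N\longrightarrow\Gamma\rtimes_\varphi\mathbb{Z}\longrightarrow\bigl(\Gamma/(\Gamma\cap N)\bigr)\rtimes_{\overline{\varphi}}\mathbb{Z}\longrightarrow 1
\]
together with Theorem~\ref{the:FJinh}~\eqref{the:FJinh:hom},~\eqref{the:FJinh:solv} and~\eqref{the:FJinh:over} I reduce to showing that the right-hand group lies in $\mathfrak{FJ}$. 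If $N\neq\{e\}$ this is the induction hypothesis applied to the lower-dimensional connected Lie group $G/N$, the lattice $\Gamma/(\Gamma\cap N)$ and the induced automorphism. If $N=\{e\}$ then, as in the proof of Theorem~\ref{thm:mainthm}, the radical of $G$ is trivial and $G$ is connected semisimple without compact factors.

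This semisimple case is the heart of the matter. By Lemma~\ref{lemma:OutG} there is an integer $m\geq 1$ with $\varphi^m$ inner, say $\varphi^m=c_{g_0}$, conjugation by some $g_0\in G$; the subgroup generated by $\Gamma$ and the $m$-th power of the $\mathbb{Z}$-generator is $\Gamma\rtimes_{\varphi^m}\mathbb{Z}$ of index $m$, so by Theorem~\ref{the:FJinh}~\eqref{the:FJinh:over} I may assume $\varphi=c_{g_0}$, and then $g_0\in N_G(\Gamma)$. Now Borel's density theorem enters: since $G$ has no compact factors, $\Gamma$ is Zariski-dense, so its centralizer in $G$ coincides with the discrete center $Z(G)$; as the identity component of $N_G(\Gamma)$ centralizes the finitely generated discrete group $\Gamma$, it is trivial, so $N_G(\Gamma)$ is discrete, of finite covolume, and contains $\Gamma$ with finite index. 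Hence some power of $g_0$ lies in $\Gamma$, the group $\Lambda:=\langle\Gamma,g_0\rangle$ is a finite union of $\Gamma$-cosets and thus a lattice in the connected Lie group $G$, and $\Lambda\in\mathfrak{FJ}$ by Theorem~\ref{thm:mainthm}. Finally, $(\gamma,n)\mapsto\gamma g_0^{\,n}$ is a surjection $\Gamma\rtimes_{c_{g_0}}\mathbb{Z}\twoheadrightarrow\Lambda$ whose kernel is a central infinite cyclic subgroup, so a last use of Theorem~\ref{the:FJinh}~\eqref{the:FJinh:hom} --- preimages of infinite cyclic subgroups are polycyclic, hence in $\mathfrak{FJ}$ by~\eqref{the:FJinh:solv} --- finishes the proof.

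I expect the semisimple case to be the main obstacle. Everything else is a routine, $\varphi$-equivariant rerun of the proof of Theorem~\ref{thm:mainthm}, made possible by the characteristicity of the subgroups involved. What genuinely needs new input is, first, Lemma~\ref{lemma:OutG}, to bring $\varphi$ into inner form, and second, Borel's density theorem, which makes $N_G(\Gamma)/\Gamma$ finite and thereby lets us realize $\Gamma\rtimes_{c_{g_0}}\mathbb{Z}$ as a central $\mathbb{Z}$-extension of the genuine lattice $\langle\Gamma,g_0\rangle$ in the \emph{connected} Lie group $G$ --- the one situation already covered by Theorem~\ref{thm:mainthm}.
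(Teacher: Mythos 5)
Your proof is correct and follows essentially the same route as the paper: induction on $\dim G$, reduction via the characteristic subgroups $K$ and $N$, and then Lemma~\ref{lemma:OutG} to make $\varphi$ inner in the semisimple case. The only difference is in the endgame: where the paper cites Greenleaf--Moskowitz for the finiteness of the Weyl group $\{g\in G \mid g\Gamma g^{-1}=\Gamma\}/\Gamma$ and then passes to a further power to identify $\Gamma\rtimes_\varphi\Z$ with $\Gamma\times\Z$, you derive the same finiteness from Borel density and instead realize $\Gamma\rtimes_{c_{g_0}}\Z$ as a central $\Z$-extension of the enlarged lattice $\langle\Gamma,g_0\rangle$ --- both versions work.
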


\begin{proof}
  We will prove this by induction on the dimension of $G$. The induction beginning
  $\dim(G)=0$ is trivial; in this case $G$ and hence $\Gamma$ are trivial so that 
  $\Gamma\rtimes \mathbb{Z}\cong   \mathbb{Z}$ 
  and hence  satisfies the Farrell-Jones conjecture for trivial reasons.

  First we want to reduce the general case to the case of a lattice in a semisimple Lie
  group. This works similarly as in the proof of Theorem~\ref{thm:mainthm}.  The
  nilradical \(N\) of \(G\) is characteristic, therefore \(\varphi\)-invariant and hence
  we get a short exact sequence.
  \[ 
   1\rightarrow \Gamma \cap N\rightarrow \Gamma \rtimes_\varphi \mathbb{Z}\rightarrow
  \Gamma/(\Gamma\cap N)\rtimes \mathbb{Z}\rightarrow 1
  \] 
  As we have seen above, the group
  $\Gamma \cap N$ is a lattice in $N$ and polycyclic. Thus preimages of virtually cyclic
  subgroups are virtually polycyclic and hence they lie in \(\mathfrak{FJ}\)
  by Theorem~\ref{the:FJinh}~\eqref{the:FJinh:solv} and~\eqref{the:FJinh:over}.  If $N$ is
  non-trivial, it has dimension bigger than zero and thus $G/N$ has smaller
  dimension. Hence the lattice $\Gamma/(\Gamma\cap N)\rtimes \mathbb{Z}$ lies in
  \(\mathfrak{FJ}\) by induction assumption and so does \(\Gamma \rtimes_\varphi \Z\) by Theorem~\ref{the:FJinh}\,\eqref{the:FJinh:hom}.  If $N$ is trivial, then the radical $R$ is
  also trivial which means that $G$ is additionally semisimple.

  Let $K$ be the maximal compact factor of $G$.  It is likewise characteristic and hence
  $\varphi$-invariant.  Thus we have a short exact sequence
  \[ 
  1 \rightarrow \Gamma \cap K \rightarrow \Gamma\rtimes_\varphi \mathbb{Z}
  \stackrel{p}{\rightarrow} \Gamma/(\Gamma\cap K)\rtimes_\varphi \mathbb{Z} \rightarrow  1. 
  \] 
  Since $K\cap \Gamma$ is finite, any preimage of a virtually cyclic subgroup under
  $p$ is again virtually cyclic and hence it lies in \(\mathfrak{FJ}\).  Thus by Theorem~\ref{the:FJinh}\,\eqref{the:FJinh:hom} it remains
  to show the statement for $\Gamma/(\Gamma\cap K)\rtimes_\varphi \mathbb{Z}$.  As
  explained above $\Gamma/(\Gamma \cap K)$ is a lattice in the Lie group $G/K$. If $K$ is
  non-trivial, then the target has smaller dimension and thus satisfies the Farrell--Jones
  conjecture by induction assumption. It remains to consider the case where $G$ is
  semisimple without compact factors.

  The outer automorphism group of $G$ is finite by Lemma~\ref{lemma:OutG}.  Since
  $\Gamma\rtimes_{\varphi^n} \Z$ has finite index in $\Gamma \rtimes_\varphi \Z$, we can
  use Theorem~\ref{the:FJinh}~\eqref{the:FJinh:over} to replace $\varphi$ by a power of
  $\varphi$ and thus we may assume that $\varphi$ is given by conjugation with $g\in
  G$. By~\cite{Greenleaf-Moskowitz:Finiteness}*{Corollary~2.2, p.~313} we have that the
  Weyl group $\{g\in G|g\Gamma g^{-1}=\Gamma\}/\Gamma$ is finite. Thus after further
  passing to a power we may assume that $\varphi$ is given by conjugation with $\gamma \in
  \Gamma$. Thus $\Gamma\rtimes_\varphi \Z\cong \Gamma\times \Z$. The isomorphism is given
  by the identity on $\Gamma$ and it sends the generator of $\Z \subset \Gamma
  \rtimes_\varphi \Z$ to $(\gamma,1)\in \Gamma \times \Z$.  Hence $\Gamma \rtimes_\varphi
  \Z$ lies in \(\mathfrak{FJ}\) by Theorem~\ref{the:FJinh}~\eqref{the:FJinh:prod} 
   and Proposition~\ref{prop:FJsemisimp}.
\end{proof}

\begin{remark} We did not show the Farrell--Jones conjecture for all groups of the form
  $\Gamma\rtimes \mathbb{Z}$, where $\Gamma$ is a lattice in a connected Lie group.  We
  only showed this for those automorphisms which extend to an automorphism of the
  surrounding Lie group.  Nevertheless, in interesting cases these extensions always exist
  and are unique, most notably if \(\Gamma\) is an irreducible lattice in a connected
  semisimple Lie group without compact factors, with trivial center and not isomorphic to
  \(\textup{PSL}(2, \R)\)~\cite{Margulis:Discrete}*{Theorem~7.5 and Remark~7.6, p.~254}.
  Note that the last requirement is essential because the group \(\textup{PSL}(2, \Z)\)
  contains the free group on three letters \(F_3\) as a subgroup of index twelve, so
  \(F_3\) is a lattice in \(\textup{PSL}(2, \R)\). If unique extension of automorphisms
  held for lattices in \(\textup{PSL}(2, \R)\), we would obtain an embedding
  \(\textup{Aut}(F_3) \rightarrow \textup{Aut}(\mathfrak{sl}(2, \R)) \subset
  \textup{GL}(3, \R)\).  It is however well-known that \(\textup{Aut}(F_3)\) has no
  faithful linear representation~\cite{Formanek-Procesi:AutFnnotlinear}.
\end{remark}

\begin{theorem}\label{thm:FJpi0}
  Let $\Gamma$ be a lattice in a Lie group $G$.  If $\pi_0(G)$ lies in \(\mathfrak{FJ}\),
  so does $\Gamma$.
\end{theorem}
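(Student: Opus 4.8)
The plan is to reduce Theorem~\ref{thm:FJpi0} to the connected case handled by Proposition~\ref{prop:FJconnauto}, following the same strategy used to pass from connected to virtually connected groups in earlier arguments, but now without the standing assumption that $\pi_0(G)$ is \emph{finite}. Let $G^0$ denote the identity component, which is an open normal subgroup of $G$, so that $\pi_0(G) = G/G^0$ is a discrete group, assumed to lie in $\mathfrak{FJ}$. The intersection $\Gamma^0 := \Gamma \cap G^0$ is a lattice in $G^0$ (finite covolume is inherited by open subgroups up to the index of $\Gamma G^0$ in $G$, which is at most the order of $\pi_0(G)$ times something finite — more precisely, $G^0$ has finite covolume quotient by $\Gamma^0$ because $G^0\Gamma/\Gamma$ is open in $G/\Gamma$ and carries positive measure in each $G^0$-coset it meets). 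Thus we get a short exact sequence
\[
1 \to \Gamma^0 \to \Gamma \to Q \to 1,
\]
where $Q := \Gamma/\Gamma^0$ embeds as a subgroup of $\pi_0(G) = G/G^0$, hence $Q \in \mathfrak{FJ}$ by Theorem~\ref{the:FJinh}~\eqref{the:FJinh:sub}.

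Next I would apply the homomorphism criterion Theorem~\ref{the:FJinh}~\eqref{the:FJinh:hom} to the quotient map $p\colon \Gamma \to Q$. We already know $Q \in \mathfrak{FJ}$ and $\ker p = \Gamma^0 \in \mathfrak{FJ}$ by Theorem~\ref{thm:mainthm} applied to the lattice $\Gamma^0$ in the connected Lie group $G^0$. What remains is to show that for every infinite cyclic subgroup $Z \subset Q$, the preimage $p^{-1}(Z)$ lies in $\mathfrak{FJ}$. Such a preimage sits in a short exact sequence $1 \to \Gamma^0 \to p^{-1}(Z) \to Z \to 1$ with $Z \cong \mathbb{Z}$, so $p^{-1}(Z)$ is a semidirect product $\Gamma^0 \rtimes_\varphi \mathbb{Z}$ for some automorphism $\varphi$ of $\Gamma^0$. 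The key point is that $\varphi$ is induced by conjugation by a lift $g \in G$ of a generator of $Z$: since $G^0$ is normal in $G$, conjugation by $g$ is an automorphism of $G^0$ preserving the lattice $\Gamma^0$. Therefore $\varphi$ extends to an automorphism of the connected Lie group $G^0$ with $\varphi(\Gamma^0) = \Gamma^0$, and Proposition~\ref{prop:FJconnauto} gives $\Gamma^0 \rtimes_\varphi \mathbb{Z} \in \mathfrak{FJ}$.

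With all three hypotheses of Theorem~\ref{the:FJinh}~\eqref{the:FJinh:hom} verified, we conclude $\Gamma \in \mathfrak{FJ}$, completing the proof. The main obstacle I anticipate is the bookkeeping around why $\Gamma^0$ is genuinely a lattice in $G^0$ when $\pi_0(G)$ may be infinite: one must check that finite covolume passes correctly to the open subgroup, and that the image $\Gamma G^0/G^0$ in $\pi_0(G)$ is such that $Q = \Gamma/\Gamma^0 \hookrightarrow \pi_0(G)$ (this embedding is automatic since $\Gamma \cap G^0 = \Gamma^0$). The covolume statement is standard — if $G^0\Gamma$ has measure-theoretic "index" that could be infinite this is fine, because we only need $\mathrm{vol}(G^0/\Gamma^0) < \infty$, which follows because a fundamental domain for $\Gamma$ in $G$ restricted to $G^0$ (a coset of positive measure) is a fundamental domain for $\Gamma^0$ in $G^0$. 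A second, more delicate point is ensuring that the extension of $\varphi$ from $\Gamma^0$ to $G^0$ really is the conjugation action and not some other automorphism — but this is immediate since we define $\varphi$ as conjugation by $g$ from the start, rather than first producing an abstract automorphism of $\Gamma^0$ and then trying to extend it; the subtlety about non-uniqueness of extensions raised in the preceding remark does not arise because we never need to extend, only to observe that a natural extension already exists.
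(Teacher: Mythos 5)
Your argument follows essentially the same route as the paper's proof: apply the homomorphism criterion of Theorem~\ref{the:FJinh}~\eqref{the:FJinh:hom} to $\Gamma \to \Gamma/(\Gamma\cap G^0) \subset \pi_0(G)$, handle the kernel $\Gamma\cap G^0$ by Theorem~\ref{thm:mainthm}, and handle preimages of infinite cyclic subgroups via Proposition~\ref{prop:FJconnauto}, observing that the twisting automorphism is conjugation by a lift in $\Gamma$ and hence extends to $G^0$. The one loose spot is your parenthetical justification that $\Gamma\cap G^0$ is a lattice in $G^0$ --- the restriction of a fundamental domain for $\Gamma$ in $G$ to $G^0$ need not be a fundamental domain for $\Gamma\cap G^0$ --- but the fact itself is correct (identify $G^0/(\Gamma\cap G^0)$ with the open finite-measure subset $G^0\Gamma/\Gamma$ of $G/\Gamma$), and the paper simply cites Vinberg et al.\ for it.
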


\begin{proof}
  Let $p:G\rightarrow \pi_0(G)$ be the projection.  Its kernel $G^0$ is the path component
  of the identity.  We get an induced group homomorphism $\Gamma\rightarrow p(\Gamma)$
  whose target lies in \(\mathfrak{FJ}\).  We want to apply
  Theorem~\ref{the:FJinh}~\eqref{the:FJinh:hom}.  Since \(G^0\) is open, the path
  components of \(G/\Gamma\) have positive measure.  Therefore the index of \(p(\Gamma)\)
  in \(\pi_0(G)\) must be finite.  Since \(G^0\) is closed,~\cite{Vinberg-et-al:Discrete}*{Theorem~4.7, p.~23} 
  asserts that \(\Gamma \cap G^0\) is
  a lattice in \(G^0\). (Mind the footnote on p.~\pageref{footnote}!) So the group
  $\Gamma\cap G^0$ lies in \(\mathfrak{FJ}\) by Theorem~\ref{thm:mainthm}.  Thus it
  remains to check that the preimage of every infinite cyclic subgroup $Z$ of $p(\Gamma)$
  lies in \(\mathfrak{FJ}\).  We have short exact sequences
  \[
  \xymatrix{1\ar[r]& G^0\ar[r]& p^{-1}(Z)\ar[r]& Z\ar[r]& 1\\
    1\ar[r]& \Gamma \cap G^0\ar[r]\ar@{^{(}->}[u]& p^{-1}(Z)\cap \Gamma
    \ar[r]\ar@{^{(}->}[u]& Z\ar[r]\ar@{=}[u]& 1.}
  \]
  Thus $p^{-1}(Z)\cap \Gamma$ can be expressed as a semidirect product $G^0\cap \Gamma
  \rtimes_\varphi \mathbb{Z}$. The automorphism $\varphi$ is given by conjugation with a
  preimage $\gamma \in p^{-1}(Z) \cap \Gamma$ of a generator of $Z$. Thus we can apply
  Proposition~\ref{prop:FJconnauto} to the lattice $\Gamma \cap G^0$ in $G^0$ with the
  automorphism $\varphi$.
\end{proof}

\begin{proof}[Proof of Theorem~\ref{thm:mostgeneral}.]
  Let \(K\) be the unique maximal, compact, normal subgroup of \(G^0\).  The Montgomery--Zippin solution to Hilbert's fifth problem implies that the factor group \(G^0/K\) is a Lie group, see for instance~\cite{Bagley-Peyrovian:CompactSubgroups}*{Lemma~1, p.~274}.  By uniqueness $K$ is characteristic in $G^0$ and
  thus normal in $G$. Consequently $G/K$ is homeomorphic to the disjoint union of
  $\pi_0(G)$-copies of $G^0/K$. The discrete space \(\pi_0(G)\) is countable because \(G\)
  is second countable, so in fact \(G/K\) is a Lie group as well.  Moreover \(\pi_0(G/K) =  \pi_0(G)\).  
  As in the beginning of the proof of Proposition~\ref{prop:FJsemisimp}, the
  group \(\Gamma / \Gamma \cap K\) is a lattice in \(G/K\) and \(\Gamma / \Gamma \cap K\)
  lies in \(\mathfrak{FJ}\) if and only if \(\Gamma\) does.  Theorem~\ref{thm:FJpi0}
  completes the proof.
\end{proof}

\begin{bibdiv}[References]
\begin{biblist}
\bib{Bagley-Peyrovian:CompactSubgroups}{article}{
   author={Bagley, R. W.},
   author={Peyrovian, M. R.},
   title={A note on compact subgroups of topological groups},
   journal={Bull. Austral. Math. Soc.},
   volume={33},
   date={1986},
   number={2},
   pages={273--278},
   issn={0004-9727},
   review={\MRref{832529}{87i:22016}},
   doi={10.1017/S0004972700003142},
}

\bib{Bartels-Farrell-Lueck:Cocompact}{article}{
  author={Bartels, A.},
  author={Farrell, T.},
  author={L\"uck, W.},
  title={The Farrell-Jones Conjecture for cocompact lattices in virtually connected Lie groups},
  date={2013},
  status={eprint, to appear in Journal of the AMS},
  note={\arXiv{1101.0469v2}},
}

\bib{Bartels-Lueck:add}{article}{
   author={Bartels, A.},
   author={L{\"u}ck, W.},
   title={On crossed product rings with twisted involutions, their module
   categories and $L$-theory},
   conference={
      title={Cohomology of groups and algebraic $K$-theory},
   },
   book={
      series={Adv. Lect. Math. (ALM)},
      volume={12},
      publisher={Int. Press, Somerville, MA},
   },
   date={2010},
   pages={1--54},
   review={\MRref{2655174}{2011m:19003}},
}

\bib{Bartels-Lueck:CAT(0)}{article}{
   author={Bartels, A.},
   author={L{\"u}ck, W.},
   title={The Borel conjecture for hyperbolic and ${\rm CAT}(0)$-groups},
   journal={Ann.\ of Math. (2)},
   volume={175},
   date={2012},
   number={2},
   pages={631--689},
   issn={0003-486X},
   review={\MRref{2993750}{}},
   doi={10.4007/annals.2012.175.2.5},
}

\bib{Bartels-Lueck-Reich:appl}{article}{
   author={Bartels, A.},
   author={L{\"u}ck, W.},
   author={Reich, H.},
   title={On the Farrell-Jones conjecture and its applications},
   journal={J. Topol.},
   volume={1},
   date={2008},
   number={1},
   pages={57--86},
   issn={1753-8416},
   review={\MRref{2365652}{2008m:19001}},
   doi={10.1112/jtopol/jtm008},
}

\bib{Bartels-Lueck-Reich:hyp}{article}{
   author={Bartels, A.},
   author={L{\"u}ck, W.},
   author={Reich, H.},
   title={The $K$-theoretic Farrell-Jones conjecture for hyperbolic groups},
   journal={Invent. Math.},
   volume={172},
   date={2008},
   number={1},
   pages={29--70},
   issn={0020-9910},
   review={\MRref{2385666}{2009c:19002}},
   doi={10.1007/s00222-007-0093-7},
}

\bib{Bartels:glnz}{article}{
    author = {Bartels, A.},
    author = {L{\"u}ck, W.},
    author = {Reich, H.},
    author = {R{\"u}ping, H.}, 
    title = {$K$- and $L$- theory of group rings over \(\textup{GL}(n, \Z)\)},
    journal= {Publications math\'ematiques de l'IH\'ES},
    year={2013},
    note={online version available at \href{http://link.springer.com/article/10.1007\%2Fs10240-013-0055-0}{http://link.springer.com/article/10.1007\%2Fs10240-013-0055-0}},
}

\bib{Bartels-Reich(2007)}{article}{
   author={Bartels, A.},
   author={Reich, H.},
   title={Coefficients for the Farrell-Jones conjecture},
   journal={Adv. Math.},
   volume={209},
   date={2007},
   number={1},
   pages={337--362},
   issn={0001-8708},
   review={\MRref{2294225}{2008a:19002}},
   doi={10.1016/j.aim.2006.05.005},
}

\bib{Borel:CliffordKlein}{article}{
   author={Borel, A.},
   title={Compact Clifford-Klein forms of symmetric spaces},
   journal={Topology},
   volume={2},
   date={1963},
   pages={111--122},
   issn={0040-9383},
   review={\MRref{0146301}{26 \#3823}},
}

\bib{Bridson-Haefliger:MetricSpaces}{book}{
   author={Bridson, M. R.},
   author={Haefliger, A.},
   title={Metric spaces of non-positive curvature},
   series={Grundlehren der Mathematischen Wissenschaften [Fundamental Principles of Mathematical Sciences]},
   volume={319},
   publisher={Springer-Verlag},
   place={Berlin},
   date={1999},
   pages={xxii+643},
   isbn={3-540-64324-9},
   review={\MRref{1744486}{2000k:53038}},
}

\bib{MOAnswer}{misc}{
    author = {Yemon Choi},
    title = {Are maximal connected semisimple subgroups automatically closed?},
    howpublished = {MathOverflow},
    note = {URL:\href{http://mathoverflow.net/q/118505}{http://mathoverflow.net/q/118505} (version: 2013-01-10)},
    eprint = {http://mathoverflow.net/q/118505},
    url = {http://mathoverflow.net/q/118505},
}

\bib{Deligne-Mostow:Monodromy}{article}{
   author={Deligne, P.},
   author={Mostow, G. D.},
   title={Monodromy of hypergeometric functions and non-lattice integral
   monodromy},
   journal={Inst. Hautes \'Etudes Sci. Publ. Math.},
   number={63},
   date={1986},
   pages={5--89},
   issn={0073-8301},
   review={\MRref{849651}{88a:22023a}},
}

\bib{Farrell-Jones}{article}{
   author={Farrell, F. T.},
   author={Jones, L. E.},
   title={Isomorphism conjectures in algebraic $K$-theory},
   journal={J. Amer. Math. Soc.},
   volume={6},
   date={1993},
   number={2},
   pages={249--297},
   issn={0894-0347},
   review={\MRref{1179537}{93h:57032}},
   doi={10.2307/2152801},
}

\bib{Farrell-Jones(1998)}{article}{
    AUTHOR = {Farrell, F. T. and Jones, L. E.},
     TITLE = {Rigidity for aspherical manifolds with
              $\pi\sb 1\subset {GL}\sb m(\mathbb{{R}})$},
   JOURNAL = {Asian J. Math.},
  FJOURNAL = {The Asian Journal of Mathematics},
    VOLUME = {2},
      YEAR = {1998},
    NUMBER = {2},
     PAGES = {215--262},
      ISSN = {1093-6106},
   MRCLASS = {57R57 (53C20)},
  MRNUMBER = {99j:57032},
MRREVIEWER = {A. A. Ranicki},
}

\bib{Formanek-Procesi:AutFnnotlinear}{article}{
   author={Formanek, E.},
   author={Procesi, C.},
   title={The automorphism group of a free group is not linear},
   journal={J. Algebra},
   volume={149},
   date={1992},
   number={2},
   pages={494--499},
   issn={0021-8693},
   review={\MRref{1172442}{93h:20038}},
   doi={10.1016/0021-8693(92)90029-L},
}

\bib{Garland-Raghunathan:Fundamental}{article}{
   author={Garland, H.},
   author={Raghunathan, M. S.},
   title={Fundamental domains for lattices in (R-)rank $1$ semisimple Lie
   groups},
   journal={Ann.\ of Math. (2)},
   volume={92},
   date={1970},
   pages={279--326},
   issn={0003-486X},
   review={\MRref{0267041}{42 \#1943}},
}

\bib{Greenleaf-Moskowitz:Finiteness}{article}{
   author={Greenleaf, F. P.},
   author={Moskowitz, M.},
   title={Finiteness results for lattices in certain Lie groups},
   journal={Ark. Mat.},
   volume={48},
   date={2010},
   number={2},
   pages={311--321},
   issn={0004-2080},
   review={\MRref{2672612}{2011i:22009}},
   doi={10.1007/s11512-009-0112-6},
}

\bib{Greenleaf-Moskowitz-Rothschild:UnboundedConjugacy}{article}{
   author={Greenleaf, F. P.},
   author={Moskowitz, M.},
   author={Rothschild, L. P.},
   title={Unbounded conjugacy classes in Lie groups and location of central measures},
   journal={Acta Math.},
   volume={132},
   date={1974},
   pages={225--243},
   issn={0001-5962},
   review={\MRref{0425035}{54 \#12993}},
}

\bib{Helgason:Symmetric}{book}{
   author={Helgason, S.},
   title={Differential geometry, Lie groups, and symmetric spaces},
   series={Graduate Studies in Mathematics},
   volume={34},
   note={Corrected reprint of the 1978 original},
   publisher={American Mathematical Society},
   place={Providence, RI},
   date={2001},
   pages={xxvi+641},
   isbn={0-8218-2848-7},
   review={\MRref{1834454}{2002b:53081}},
}

\bib{Lueck:GroupRings}{article}{
   author={L{\"u}ck, W.},
   title={$K$- and $L$-theory of group rings},
   conference={
      title={Proceedings of the International Congress of Mathematicians. Volume II},
   },
   book={
      publisher={Hindustan Book Agency},
      place={New Delhi},
   },
   date={2010},
   pages={1071--1098},
   review={\MRref{2827832}{2012f:19011}},
}

\bib{Lueck-Reich:survey}{article}{
   author={L{\"u}ck, W.},
   author={Reich, H.},
   title={The Baum-Connes and the Farrell-Jones conjectures in $K$- and
   $L$-theory},
   conference={
      title={Handbook of $K$-theory. Vol. 1, 2},
   },
   book={
      publisher={Springer},
      place={Berlin},
   },
   date={2005},
   pages={703--842},
   review={\MRref{2181833}{2006k:19012}},
}

\bib{Margulis:Arithmeticity}{article}{
   author={Margulis, G. A.},
   title={Arithmeticity of the irreducible lattices in the semisimple groups
   of rank greater than $1$},
   journal={Invent. Math.},
   volume={76},
   date={1984},
   number={1},
   pages={93--120},
   issn={0020-9910},
   review={\MRref{739627}{85j:22021}},
   doi={10.1007/BF01388494},
}

\bib{Margulis:Discrete}{book}{
   author={Margulis, G. A.},
   title={Discrete subgroups of semisimple Lie groups},
   series={Ergebnisse der Mathematik und ihrer Grenzgebiete (3) [Results in Mathematics and Related Areas (3)]},
   volume={17},
   publisher={Springer-Verlag},
   place={Berlin},
   date={1991},
   pages={x+388},
   isbn={3-540-12179-X},
   review={\MRref{1090825}{92h:22021}},
}

\bib{Onishchik:EncyclopaediaI}{collection}{
   author={Onishchik, A. L.},
   title={Lie groups and Lie algebras. I},
   series={Encyclopaedia of Mathematical Sciences},
   volume={20},
   note={Foundations of Lie theory. Lie transformation groups; A translation of {\it Current problems in mathematics. Fundamental directions. Vol. 20} (Russian), Akad. Nauk SSSR, Vsesoyuz. Inst. Nauchn. i Tekhn. Inform., Moscow, 1988 [\MRref{0950861}{89m:22010}]; Translation by A. Kozlowski; Translation edited by A. L. Onishchik},
   publisher={Springer-Verlag},
   place={Berlin},
   date={1993},
   pages={vi+235},
   isbn={3-540-18697-2},
   review={\MRref{1306737}{95f:22001}},
   doi={10.1007/978-3-642-57999-8},
}

\bib{Platonov-Rapinchuk:AlgebraicGroups}{book}{
   author={Platonov, V.},
   author={Rapinchuk, A.},
   title={Algebraic groups and number theory},
   series={Pure and Applied Mathematics},
   volume={139},
   note={Translated from the 1991 Russian original by Rachel Rowen},
   publisher={Academic Press Inc.},
   place={Boston, MA},
   date={1994},
   pages={xii+614},
   isbn={0-12-558180-7},
   review={\MRref{1278263}{95b:11039}},
}

\bib{Raghunathan:Discrete}{book}{
   author={Raghunathan, M. S.},
   title={Discrete subgroups of Lie groups},
   note={Ergebnisse der Mathematik und ihrer Grenzgebiete, Band 68},
   publisher={Springer-Verlag},
   place={New York},
   date={1972},
   pages={ix+227},
   review={\MRref{0507234}{58 \#22394a}},
}

\bib{Ragozin:Normal}{article}{
   author={Ragozin, D. L.},
   title={A normal subgroup of a semisimple Lie group is closed},
   journal={Proc. Amer. Math. Soc.},
   volume={32},
   date={1972},
   pages={632--633},
   issn={0002-9939},
   review={\MRref{0294563}{45 \#3633}},
}

\bib{Rueping:GLQ}{article}{
  author={R\"uping, H.},
  title={The Farrell-Jones conjecture for S-arithmetic groups},
  date={2013},
  status={eprint available at},
  note={\arXiv{1309.7236v1}},
}

\bib{Starkov:Counterexample}{article}{
   author={Starkov, A. N.},
   title={A counterexample to a theorem on lattices in Lie groups},
   language={Russian},
   journal={Vestnik Moskov. Univ. Ser. I Mat. Mekh.},
   date={1984},
   number={5},
   pages={68--69},
   issn={0201-7385},
   review={\MRref{764036}{86f:22013}},
}

\bib{Varadarajan:LieGroups}{book}{
   author={Varadarajan, V. S.},
   title={Lie groups, Lie algebras, and their representations},
   note={Prentice-Hall Series in Modern Analysis},
   publisher={Prentice-Hall Inc.},
   place={Englewood Cliffs, N.J.},
   date={1974},
   pages={xiii+430},
   review={\MRref{0376938}{51 \#13113}},
}

\bib{Vinberg-et-al:Discrete}{article}{
   author={Vinberg, E. B.},
   author={Gorbatsevich, V. V.},
   author={Shvartsman, O. V.},
   title={Discrete subgroups of Lie groups [\MRref{0968445}{90c:22036}]},
   conference={
      title={Lie groups and Lie algebras, II},
   },
   book={
      series={Encyclopaedia Math. Sci.},
      volume={21},
      publisher={Springer},
      place={Berlin},
   },
   date={2000},
   pages={1--123, 217--223},
   review={\MRref{1756407}{}},
}

\bib{Wegner:CAT(0)}{article}{
   author={Wegner, C.},
   title={The $K$-theoretic Farrell-Jones conjecture for CAT(0)-groups},
   journal={Proc. Amer. Math. Soc.},
   volume={140},
   date={2012},
   number={3},
   pages={779--793},
   issn={0002-9939},
   review={\MRref{2869063}{}},
   doi={10.1090/S0002-9939-2011-11150-X},
}

\bib{Wegner:Solvable}{article}{
  author={Wegner, C.},
  title={The Farrell-Jones Conjecture for virtually solvable groups},
  date={2013},
  status={eprint available at},
  note={\arXiv{1308.2432v2}},
}

\bib{Whitney:Elementary}{article}{
   author={Whitney, H.},
   title={Elementary structure of real algebraic varieties},
   journal={Ann. \ of Math. (2)},
   volume={66},
   date={1957},
   pages={545--556},
   issn={0003-486X},
   review={\MRref{0095844}{20 \#2342}},
}

\bib{Wu:Note}{article}{
   author={Wu, T. S.},
   title={A note on a theorem on lattices in Lie groups},
   journal={Canad. Math. Bull.},
   volume={31},
   date={1988},
   number={2},
   pages={190--193},
   issn={0008-4395},
   review={\MRref{942071}{89e:22017}},
   doi={10.4153/CMB-1988-029-8},
}

\end{biblist}
\end{bibdiv}

\end{document}